\newcommand{\C}{{\mathbb{C}}}          
\newcommand{\R}{{\mathbb{R}}}          
\newcommand{\gll}{{\mathfrak{gl}}}       %
\newcommand{\XIS}{{\mathfrak{X}}}
\newcommand{\estrela}{{\boldsymbol{\star}}}
\newcommand{\SO}{{\mathrm{SO}}}
\newcommand{\Ort}{{\mathrm{O}}}
\newcommand{\SU}{{\mathrm{SU}}}
\newcommand{\Uni}{{\mathrm{U}}}
\newcommand{\rr}{\rightarrow}
\newcommand{\lrr}{\longrightarrow}
\newcommand{\calu}{{\cal U}}             %
\newcommand{\na}{{\nabla}}
\newcommand{\End}[1]{{\mathrm{End}}\,{#1}}
\newcommand{\dx}{{\mathrm{d}}}
\newcommand{\inv}[1]{{#1}^{-1}}
\newcommand{\papa}[2]{\frac{\partial#1}{\partial#2}}
\newcommand{\dpapa}[2]{\dfrac{\partial#1}{\partial#2}}
\newcommand{\ciconiametric}{{\mathrm{g}}}
\newcommand{\db}{\overline{\partial}}
\newcommand{\cz}{{\overline{z}}}
\newcommand{\cw}{{\overline{w}}}
\newcommand{\ceta}{{\overline{\eta}}}
\newcommand{\ca}{{\overline{a}}}
\newcommand{\cgamma}{{\overline{\Gamma}}}
\newcommand{\fhmenosaquadrado}{\vartriangle}
\newcommand{\cinf}[1]{{\mathrm{C}}^\infty_{#1}}
\newtheorem{teo}{Theorem}[section]
\newtheorem{lemma}{Lemma}[section]
\newtheorem{coro}{Corollary}[section]
\newtheorem{prop}{Proposition}[section]
\newtheorem*{propsemnumero}{Proposition}
\newtheorem{defi}{Definition}[section]
\def\cyclic{\mathop{\kern0.9ex{{+}
\kern-2.2ex\raise-.28ex\hbox{\Large\hbox{$\circlearrowright$}}}}\limits}
\title{The ciconia metric on the tangent bundle \\ of an almost-Hermitian manifold}
\author{R. Albuquerque}
\begin{document}

\maketitle

\begin{abstract}

We find a new class of invariant metrics existing on the tangent bundle of any given almost-Hermitian manifold. We focus here on the case of Riemannian surfaces, which yield new examples of K\"ahlerian Ricci-flat manifolds in four real dimensions.

\end{abstract}

\ 
\vspace*{4mm}\\
{\bf Key Words:} tangent bundle, structure group, K\"ahler-Einstein metric, Ricci-flat metric
\vspace*{1mm}\\
{\bf MSC 2010:} Primary: 32Q20, 53C10; Secondary: 53C20, 53C55

\vspace*{7mm}

\markright{\sl\hfill  R. Albuquerque \hfill}

\section{A new invariant metric}

\subsection{Introduction}

This article brings to light a new geometric structure associated to any given almost-Hermitian manifold $(M,g,J)$.

We define an almost-Hermitian structure on $TM$, adding to the theory of the geometry of tangent bundles, for which the underlying metric generalizes both the Sasaki and the Yano metrics with weights. Indeed here a new invariant symmetric tensor is exhibited, `$g_a$', which combines with the well-known geometry of Riemannian fibre bundles.

The literature on similar structures does not refer our invariant construction. Comparison with recent studies on special metrics on tangent bundles, like those originating from $g$-natural, Calabi, Eguchi-Hanson, Gibbons-Hawking or Taub-NUT metrics, cf. \cite{AbbSarih,Blair,Calabi,DaviesYano,GibbonsHawking,KMS,KowSek0,Stenzel}, will show that a \textit{ciconia metric} stands quite unique in the field of Riemannian structures on vector bundles. For instance, the well-known $\SU(2)$-holonomy Stenzel metric on $T_{S^2}$, which is a special case of Eguchi-Hanson metric, cannot be realised as a ciconia metric since the zero-section is there a Lagrangian submanifold, cf. \cite{Stenzel}.

Any orientable 2-dimensional Riemannian manifold $M$, any two real functions $f,h$ and any complex function $a$ on $M$, such that $fh-|a|^2\neq0$, give rise to a new example. Therefore we have a new class of pseudo-Riemannian and Riemannian geometries. We concentrate on the lowest possible dimension, much more being due to be researched. The curvature of ciconia metrics, for example, stands as an open question in the general setting. It would be quite important to analyse the case of a K\"ahler manifold base.

\subsection{The diagonal structure of the tangent manifold}

\label{sec:Introduction}

It is widely known that the total space of the tangent bundle of any given $\mathrm{C}^k$ differentiable manifold $M$ carries the structure of a $\mathrm{C}^{k-1}$ differentiable manifold of twice the dimension of $M$. It is also very well known the existence of a pseudo-Riemannian structure on the same total space, the so-called Sasaki metric, when the base manifold is endowed with a pseudo-Riemannian structure.

The purpose of the present article is to introduce a more general invariant construction, which is natural to almost-Hermitian geometry in any dimension. We shall start our study later-on with the analysis of the 2-dimensional case.

Let $\pi:TM\lrr M$ denote the tangent bundle of a smooth Riemannian ma\-ni\-fold $(M,g)$. We denote by $T_M$ the total space of such vector bundle with structure group $\mathrm{GL}(m,\R)$, where $m=\dim M$. In order to swiftly present the main results, we let $\na$ denote the Levi-Civita connection of $M$ (though any other metric-connection would be interesting to consider as well). The natural vertical tangent bundle to $T_M$, this is, $V:=\ker\dx\pi$, admits a tautological section $U$ defined by $U_u=u\in T_{\pi(u)}M$. Then the well-esta\-bli\-shed theory proves two identifications; first, that $V=\pi^\estrela TM$ and, second, that ${H^{\na}}:=\ker(\pi^\estrela\na_\cdot U)$ defines a vector bundle and a complement to $V$. Indeed, we have, $\forall X\in TT_M$,
\[    \pi^\estrela\na_{X}U=X^v \]
(using the canonical notation for the projections). The horizontal distribution ${H^{\na}}$ is then also identified isomorphically with $\pi^*TM$, via $(\dx\pi)_|:{H^{\na}}\lrr\pi^*TM$, as vector bundles over $T_M$.

With the canonical splitting $TT_M:=T(T_M)={H^{\na}}\oplus V\simeq\pi^*TM\oplus\pi^\estrela TM$ comes the notion of a \textit{mirror map} $B$ of $TT_M$. Such is the endomorphism which sends a horizontal vector to the respective vertical lift and sends any vertical vector to 0. Then we have the notion of \textit{adapted} frame on $T_M$, which is a horizontal lift of a frame on $M$ together with or followed by its mirror in $V$.

Adapted frames on the manifold $T_M$ yield the structural group reduction $\mathrm{GL}(m,\R)\hookrightarrow\mathrm{GL}(2m,\R)$. The representation of the smaller subgroup follows from the {diagonal} inclusion in the larger. Such is the peculiar feature of the geometry of tangent vector bundles. An immediate consequence, e.g., is that the reduction to the diagonal subgroup is carried through $\mathrm{GL}_+(2m,\R)$. Hence the tangent manifold is always orientable, independently of $M$.

Recall $TM\lrr M$ is a vector bundle associated to the principal frame bundle $FM\lrr M$. Now, given the connection $\na$, the vector bundle $TT_M\lrr T_M$ can be associated to $\pi^*FM\lrr T_M$ with the same structure group: $TT_M=\pi^*FM\times_{\mathrm{GL}(m,\R)}(\R^m\oplus\R^m)$.

For any tensor field on $M$, we write $\pi^*$ to denote a pull-back or horizontal lift to $T_M$, and write $\pi^\estrela$ to denote a vertical lift. Regarding nomenclature, the vector field $U$ is also known as the Liouville vector field and $S=B^\mathrm{t} U$ as the geodesic spray.

We shall intensely use the following associated linear connection on $T_M$. We denote by $\na^*$ the direct-sum of the pull-back connections $\pi^*\na$ on both sides of the canonical splitting of $TT_M$. In particular, we find $\na^*B=0$. Indeed we obtain a linear connection on the manifold $T_M$, satisfying, $\forall X\in TT_M,\ Y\in\XIS_M$,
\[    \na^*_{X}\pi^*Y=\pi^*(\na_{\dx\pi(X)}Y), \qquad
\na^*_{X}\pi^\estrela Y=\pi^\estrela(\na_{\dx\pi(X)}Y) .\]


Now, introducing the Riemannian structure $(M,g)$, we may consider the frames on $M$ which are orthonormal, and proceed with a further reduction to the principal bundle which has $\Ort(m)$ as structure Lie group. A reduction of the original structure group of $TT_M$ follows, as before,
from the existence of adapted frames and from the natural \textit{diagonal} inclusion
\begin{equation}
  \Ort(m)\hookrightarrow\Ort(m)\times\Ort(m)\subset\mathrm{GL}(2m,\R) .
\end{equation}

Finally we are ready for the presentation of a new idea on $T_M$.

One may study product metrics on any total spaces of pseudo-Riemannian vector bundles over pseudo-Riemannian manifolds, of any rank, through `H+V' decomposition of their tangent bundle, cf. \cite{Alb2014d,Calabi}. However, in such cases the structure group corresponds in general with the \textit{product} of two Lie groups. Even so, as we shall do below, one may still include weights on horizontal and on vertical directions.

On tangent spaces, the so-called $g$-natural metrics have not ceased to being studied ever since the Sasaki metric was first found. References \cite{AbbSarih,Alb2014d,Blair,KMS,KowSek0} and others therein may guide the interested reader. Several Riemannian metrics of different types on $T_M$ have been discovered in the latest decades, in the breadth of ideas such as those described in \cite{GibbonsHawking}.



\subsection{The new almost-Hermitian metric}

\label{sec:Thenewmetric}

Regarding the reduction to $\Ort(m)$ on the tangent manifold, which has now become clear, a new metric structure is admissible without further imposing any restrictions on the base manifold. As explained earlier, we stress this new structure is impossible to reproduce on other vector bundle manifolds.

Taking any isomorphism $A\in\End{(TT_M)}$ symmetric for the canonical (Sasaki) metric, one may define another metric on the same manifold by $(\pi^*g\oplus\pi^\estrela g)(A\,\cdot\,,\,\cdot\,)$. In particular, we consider the following symmetric bilinear-form:
\begin{equation}\label{Definitionciconiametric}
 \ciconiametric_{f,a,h}=f\pi^*g+g_a+h\pi^\estrela g
\end{equation}
where $f$ and $h$ are real functions, $f,h:T_M\lrr\R$, and $g_a$ is defined by
\begin{equation}\label{Definitionmetric_g_a}
\begin{split}
  g_a(x^h,y^v) &=\pi^\estrela g(a(x^h),y^v) \\
  &=\pi^*g(x^h,a^\mathrm{t}(y^v)) ,
\end{split}
\end{equation}
$\forall u\in T_M$, $\forall x,y\in T_{\pi(u)}M$, with $a$ an endomorphism of $TT_M$ such that $a(x^v)=a^\mathrm{t}(x^h)=0$. Still there is more to this example, because it coincides with the general case.
\begin{prop}
Any Riemannian structure given by $(\pi^*g\oplus\pi^\estrela g)(A\,\cdot\,,\,\cdot\,)$ is compatible with the reduction to the structure group $\Ort(m)$ if and only if it is of the above type \eqref{Definitionciconiametric}.
\end{prop}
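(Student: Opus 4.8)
The plan is to translate compatibility with the reduction into $\Ort(m)$-invariance of the bilinear form, read in an adapted orthonormal frame, and then to settle the resulting pointwise problem by Schur's lemma.

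First I would fix, over a neighbourhood, an adapted orthonormal frame $\{e_i^h,e_i^v\}_{i=1}^m$, namely the horizontal lift of an orthonormal frame of $M$ together with its mirror in $V$. Since the canonical metric $\pi^*g\oplus\pi^\star g$ is the identity matrix in any such frame, a symmetric $A$ and the induced form $G:=A\lrcorner(\pi^*g\oplus\pi^\star g)$ share one and the same block matrix relative to $TT_M=H^{\na}\oplus V$. I would split that matrix into a horizontal block $P$, a vertical block $Q$, and a cross block $R$ with its transpose. By the discussion preceding the statement, a change of adapted orthonormal frame is governed by the \emph{diagonal} inclusion, acting by $e_i^h\mapsto e_j^h\,g^j_i$ and $e_i^v\mapsto e_j^v\,g^j_i$ with one and the same $g\in\Ort(m)$. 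Compatibility with the reduction means exactly that $G$ is well defined independently of this choice, i.e. that the three blocks are fixed by the action.

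Reading off the transformation law, and recalling $g^T=g^{-1}$ for $g\in\Ort(m)$, each block changes by conjugation $X\mapsto g^{-1}Xg$; hence compatibility is equivalent to $g^{-1}Pg=P$, $g^{-1}Qg=Q$ and $g^{-1}Rg=R$ for every $g\in\Ort(m)$, that is, $P,Q,R$ all commute with the standard representation of $\Ort(m)$ on $\R^m$. The key step is now Schur's lemma: this representation is absolutely irreducible, so its real commutant is exactly $\R\,\mathrm{Id}$. Hence there are functions $f,h,a$ on $T_M$ with $P=f\,\mathrm{Id}$, $Q=h\,\mathrm{Id}$ and $R=a\,\mathrm{Id}$. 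Reassembling, $P$ and $Q$ reproduce $f\pi^*g$ and $h\pi^\star g$, while the scalar cross block $R$ is the form $g_a$ associated with the endomorphism $aB$ --- the mirror $B$ scaled by the function $a$ --- so that $G=\ciconiametric_{f,a,h}$. The Riemannian hypothesis on $A$ enters only as the pointwise positivity of $f$ and of $fh-a^2$, which does not affect the structural form.

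For the converse, each of $\pi^*g$, $\pi^\star g$ and $g_a$ (with $a$ a function multiple of $B$) is assembled solely from $g$ and the canonical diagonal identifications of $H^{\na}$ and $V$ with $\pi^*TM$, hence is manifestly invariant under the diagonal $\Ort(m)$; multiplying by functions on $T_M$ preserves invariance, so every $\ciconiametric_{f,a,h}$ is compatible. I expect the delicate point to be the cross block $R$: one must check that $\Ort(m)$-equivariance of a map between the two \emph{distinct} standard-representation summands $H^{\na}$ and $V$ still yields only scalars, and, above all, that it is the orientation-reversing elements that do the work. Over $\SO(m)$ alone the commutant is larger --- for $m=2$ it admits the invariant complex structure $J$ --- so $R$ could carry a $J$-term, i.e. an antisymmetric cross pairing; it is precisely the reflections retained in the full group $\Ort(m)$ that exclude this and force $R$ to be symmetric and scalar. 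Confirming that the reduction genuinely keeps those elements is what makes the characterization sharp.
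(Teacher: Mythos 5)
Your proposal is correct and follows essentially the same route as the paper: compatibility with the reduction is translated into invariance of the block matrix of $A$ under the diagonal $\Ort(m)$-representation in an adapted orthonormal frame, and the blocks are then identified by computing the commutant (the paper delegates this to Lemma \ref{Lemmadarepresentacao} via ``Lie algebra theory'', you invoke Schur's lemma and absolute irreducibility of the standard representation, which is the same computation). Your closing remark about the orientation-reversing elements killing the $J$-term for $m=2$ is exactly the distinction the paper draws between the $\Ort(m)$ and $\SO(m)$ cases of that Lemma.
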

Indeed, the metric being defined independently of the choice of adapted frame is the same as $A$ being invariant for the diagonal representation. Equivalently, with respect to the canonical splitting of $TT_M$, there exists a vector bundle morphism $a:{H^{\na}}\lrr V$, commuting with the $\Ort(m)$-representation, and there exist functions $f,h$ as above such that
\begin{equation}\label{symmetriccommutewithdiagonalaction}
 A=\left[\begin{array}{cc} f1_m & a^\mathrm{t} \\
a & h1_m    \end{array}   \right].
\end{equation}
The result follows from the next Lemma, which further determines all possible $a$.
\begin{lemma}\label{Lemmadarepresentacao}
 A symmetric linear map $A$ from the Euclidean space $\R^m\oplus\R^m$ onto itself commutes with the diagonal representation of $\SO(m)$, respectively $\Ort(m)$, if and only if there exist $f,h\in\R$ and a linear map $a:\R^m\rr\R^m$ in the centraliser subgroup of $\SO(m)$, respectively $\Ort(m)$, in $\gll(m,\R)$ such that $A$ has the shape of \eqref{symmetriccommutewithdiagonalaction}.
 
 Moreover, in the case of $\Ort(m)$, then $a=b1_m$ for some $b\in\R$; and in the case of $\SO(m)$ then two cases are possible:\\
 (i) for $m=2$, we have $a=b+ic=\left[\begin{array}{cc} b & -c \\ c & b \end{array}\right]$ with $b,c\in\R$\ \,($i=\sqrt{-1}$);\\
 (ii) for $m>2$, we have $a=b1_m$ for some $b\in\R$.
 \end{lemma}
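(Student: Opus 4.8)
The plan is to pass to $m\times m$ blocks with respect to the splitting $\R^m\oplus\R^m$ and to translate the commutation requirement into a condition on each block separately. Writing
\[
A=\left[\begin{array}{cc} P & Q \\ R & S \end{array}\right],
\]
the demand that $A$ commute with every diagonal element $\mathrm{diag}(g,g)$, for $g$ in the group $G\in\{\SO(m),\Ort(m)\}$, is equivalent to $Pg=gP$, $Qg=gQ$, $Rg=gR$ and $Sg=gS$ for all $g$; that is, each of the four blocks lies in the commutant of the standard representation of $G$ on $\R^m$. The symmetry of $A$ for the Euclidean metric forces $P=P^\dag$, $S=S^\dag$ and $R=Q^\dag$, so, naming $a:=R$, the matrix already takes the shape \eqref{symmetriccommutewithdiagonalaction}. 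This reduces the whole statement, including the ``moreover'' part, to identifying the commutant of the standard representation and singling out its symmetric elements.

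For the identification I would compute the commutant directly. The standard representation is irreducible over $\R$ for every $m\geq2$, since $G$ acts transitively on the unit sphere; hence by Schur's lemma the commutant is a real division algebra, one of $\R$, $\C$ or $\Hamil$, and it remains only to decide which. For $m>2$ I would avoid complexification and argue concretely: any $K$ commuting with the whole of $\SO(m)$ in particular commutes with the stabiliser $\SO(m-1)$ of $e_1$, whose only fixed vectors in $\R^m$ lie on $\R e_1$ (as $\SO(m-1)$ has no nonzero fixed vector in $e_1^\perp$ when $m-1\geq2$); thus $Ke_1=\lambda e_1$, and transitivity on the sphere propagates the same $\lambda$ to every vector, giving $K=\lambda1_m$. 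This is case (ii). For $\SO(2)$ the rotations are exactly the complex-linear maps of $\R^2\simeq\C$, so commuting with a single rotation through an angle $\ne0,\pi$ already forces $K=b\,1_2+c\,J$ with $J=\left[\begin{smallmatrix}0&-1\\1&0\end{smallmatrix}\right]$, i.e. the commutant is $\C$; this is case (i).

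The last step feeds the symmetry of $A$ back in and disposes of $\Ort(m)$. A matrix $b\,1_2+c\,J$ is symmetric precisely when $c=0$, while a real scalar $\lambda1_m$ is always symmetric; therefore the diagonal blocks $P=P^\dag$ and $S=S^\dag$ are forced to be genuine scalars $f\,1_m$ and $h\,1_m$ in every case, whereas the off-diagonal block $a$ ranges freely over the full commutant (the conjugate $a^\dag$ automatically commutes with $G$ as well, since $g^\dag=g^{-1}\in G$). Passing from $\SO(m)$ to $\Ort(m)$ one further imposes commutation with a reflection; for $m=2$ the reflection $\mathrm{diag}(1,-1)$ does not commute with $J$, so it collapses $\C$ to $\R\,1_m$, and for $m>2$ the commutant is already $\R\,1_m$ through $\SO(m)\subset\Ort(m)$. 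This yields $a=b\,1_m$ in the orthogonal case and completes the classification.

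I expect the only genuine obstacle to be the computation of the commutant, namely the clean separation between the absolutely irreducible regime $m>2$ and the complex-type representation $\SO(2)$, together with the reflection argument that removes the extra $\C$-freedom for $\Ort(m)$. Everything else is bookkeeping with the $2m\times2m$ blocks and the elementary observation that transposition preserves the commutant because $G$ is closed under inversion.
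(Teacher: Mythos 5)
Your proof is correct and follows the same skeleton as the paper's: decompose $A$ into four $m\times m$ blocks, observe that commuting with $\mathrm{diag}(g,g)$ forces each block into the commutant while symmetry fixes the relations among them, and then identify the commutant of the standard representation. The only difference is that where the paper simply declares the commutant ``easily determined from Lie algebra theory,'' you supply the elementary argument (irreducibility via transitivity on the sphere, the $\SO(m-1)$-stabiliser computation for $m>2$, the centraliser of a single rotation for $m=2$, and the reflection that kills the complex structure in the $\Ort(m)$ case), which is a welcome filling-in of detail rather than a different route.
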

\begin{proof}
 The first part is trivial: writing $A$ as a four blocks symmetric matrix, then $A$ commutes with 
 $\left[\begin{array}{cc} o &  \\  & o    \end{array}   \right] ,\ \forall o\in\Ort(m)$,
 if and only if it is of the desired shape. For the second part, the centraliser subgroup of the orthogonal group in $\gll(m,\R)$ is easily determined from Lie algebra theory. The exception in the case oriented and $m=2$ is immediate.
\end{proof}
Clearly, the canonical or Sasaki metric corresponds with the bilinear-form $\ciconiametric_{1,0,1}$. And the identity map $a=1_m$ corresponds to the morphism $B$ restricted to horizontals.

\begin{prop}\label{prop_signatureofciconiametric}
Let $\fhmenosaquadrado=fh-|a|^2$. Then the metric $\ciconiametric_{f,a,h}$ is:\\
 (i) positive definite if and only if $f>0$ and $\fhmenosaquadrado>0$;\\
 (ii) negative definite if and only if $f<0$ and $\fhmenosaquadrado>0$;\\
 (iii) of signature $(m,m)$ if and only if $f<0$ and $\fhmenosaquadrado<0$, or $f=0$ and $a\neq0$, or $f>0$ and $\fhmenosaquadrado<0$; in other words, if and only if $\fhmenosaquadrado<0$.

No other signatures may occur.
\end{prop}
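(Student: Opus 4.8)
The plan is to read the statement as a fibrewise assertion in linear algebra about the symmetric $2m\times2m$ Gram matrix of $\ciconiametric_{f,a,h}$ written in an adapted orthonormal frame, namely
\[
 A=\left[\begin{array}{cc} f1_m & a^\dag \\ a & h1_m \end{array}\right],
\]
as in \eqref{symmetriccommutewithdiagonalaction}. Since the metric is frame-independent, its signature at each point equals that of $A$. The linchpin, supplied by Lemma \ref{Lemmadarepresentacao}, is that in every admissible case the block $a$ is \emph{conformal}, i.e. $a^\dag a=aa^\dag=|a|^21_m$: for $a=b1_m$ this gives $|a|^2=b^2$, while in the oriented case $m=2$ the map $a=b+ic$ satisfies $a^\dag a=(b^2+c^2)1_2$, so $|a|^2=b^2+c^2$. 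I would therefore write $a=\rho R$ with $\rho=|a|\ge0$ and $R\in\Ort(m)$ (taking $R=1_m$ when $a=0$), which is exactly what lets the argument proceed with no case split on the orthogonal/oriented dichotomy.

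First I would block-diagonalise $A$ by an explicit orthonormal change of frame. Fix an orthonormal basis $e_1,\dots,e_m$ of $\R^m$ and, for each $k$, consider the two unit vectors $(e_k,0)$ and $(0,Re_k)$ in $\R^m\oplus\R^m$. Using $ae_k=\rho Re_k$ and $a^\dag Re_k=\rho e_k$, a direct computation gives
\[
 A(e_k,0)=f(e_k,0)+\rho(0,Re_k),\qquad A(0,Re_k)=\rho(e_k,0)+h(0,Re_k),
\]
so each plane $\mathrm{span}\{(e_k,0),(0,Re_k)\}$ is $A$-invariant. These $m$ planes are mutually orthogonal (orthogonality across the two summands is automatic, and $R\in\Ort(m)$ keeps the vertical vectors orthonormal) and together span $\R^{2m}$. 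Hence $A$ is orthogonally equivalent to $m$ copies of the single symmetric matrix
\[
 M=\left[\begin{array}{cc} f & \rho \\ \rho & h \end{array}\right],\qquad \det M=fh-|a|^2=\fhmenosaquadrado .
\]

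It then remains to read off the signature of the $2\times2$ matrix $M$ and multiply the counts of positive and negative eigenvalues by $m$. For $M$ the standard criteria apply: positive definite iff its leading minor $f$ and its determinant $\fhmenosaquadrado$ are both positive; negative definite iff $f<0$ and $\fhmenosaquadrado>0$; of signature $(1,1)$ iff $\fhmenosaquadrado<0$; degenerate iff $\fhmenosaquadrado=0$. Multiplying by $m$ turns these into the signatures $(2m,0)$, $(0,2m)$ and $(m,m)$, which are precisely (i), (ii) and (iii). For the reformulation in (iii) I would note that the three listed possibilities are mutually exclusive and exhaust $\{\fhmenosaquadrado<0\}$: when $f=0$ one has $\fhmenosaquadrado=-|a|^2$, so $\fhmenosaquadrado<0$ is exactly $a\neq0$. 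Finally, since each eigenvalue of $A$ occurs with multiplicity $m$, the number of positive (resp.\ negative) eigenvalues lies in $\{0,m,2m\}$; this is why no intermediate signature $(m+j,m-j)$ with $0<j<m$ can occur, which is the content of the last sentence.

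The computation is entirely elementary, so I expect no genuine obstacle. The only point that must be got right uniformly is the conformality $aa^\dag=|a|^21_m$, which is where Lemma \ref{Lemmadarepresentacao} does the essential work and which makes the orthonormal reduction to $M^{\oplus m}$ go through without any distinction on the signs of $f$ and $h$; the lone piece of bookkeeping, the boundary $f=0$ inside part (iii), is disposed of by the identity $\fhmenosaquadrado=-|a|^2$ noted above.
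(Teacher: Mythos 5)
Your argument is correct and complete. Note that the paper states this proposition without any proof at all, so there is nothing to compare against; your write-up actually supplies the missing argument. The route you take is the natural one and surely the intended one: the conformality $a^\dag a=aa^\dag=|a|^2 1_m$ guaranteed by Lemma \ref{Lemmadarepresentacao} lets you split $\R^m\oplus\R^m$ into $m$ mutually orthogonal $A$-invariant planes on each of which $A$ acts as the single $2\times 2$ matrix $\left[\begin{smallmatrix} f & |a| \\ |a| & h\end{smallmatrix}\right]$, after which Sylvester's criterion and the determinant sign give (i)--(iii), the multiplicity-$m$ eigenvalues rule out intermediate signatures, and the identity $\fhmenosaquadrado=-|a|^2$ at $f=0$ settles the reformulation in (iii). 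No gaps.
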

\begin{proof}
 Recall $|a|^2=b^2+c^2$.
 Notice for case $m>2$, then $c=0$ and thus, with $f\neq0$ or $h\neq0$, it is trivial to find the sequence of minors of $A$ in \eqref{symmetriccommutewithdiagonalaction}. That is $f,f^2,\ldots,f^m,\ldots,f^m(h-\frac{b^2}{f})^k=\frac{f^m\fhmenosaquadrado^k}{f^k},\ldots,\ \forall 1\leq k\leq m$. For $m=2$ and any $c$, the sequence of minors of $A$ is $f,f^2,f\fhmenosaquadrado,\fhmenosaquadrado^2$. Due to the shape of the matrix, the signature $(m,m)$ is the only remaining complementary to definite signature. If $f=h=0$ and $m>2$, the case is trivial. For $m=2$, we consider an adapted frame $u_1,u_2,u_3,u_4$ with $g_a(u_2,u_4)=g_a(u_1,u_3)=b,\ -g_a(u_2,u_3)=g_a(u_1,u_4)=c$ and all other products vanishing, and moreover, without loss of generality, we assume $b\neq0$. Then a $(2,2)$-orthonormal frame is found through the orthogonal frame $v_1=u_1+u_3,\ v_2=u_2+u_4$,
 \[ v_3=\frac{b-c}{2}(u_1+u_4)-\frac{b+c}{2}(u_2+u_3),\quad v_4=\frac{b+c}{2}(u_1-u_4)+\frac{b-c}{2}(u_2-u_3) . \]
 The identities $g_a(v_1,v_1)=g_a(v_2,v_2)=2b,\ g_a(v_3,v_3)=g_a(v_4,v_4)=-b(b^2+c^2)$ follow.
\end{proof}
Of all the metrics defined on a tangent manifold, the $g$-natural metrics are the most studied in the literature, cf. \cite{Blair,KMS,KowSek0}. For instance, recall the pseudo-Riemannian metric $\ciconiametric_{0,1_m,0}$ was essentially found by K.~Yano, cf. \cite{AbbSarih,DaviesYano,KMS,KowSek0}. The $g$-natural metrics in general do coincide with the metric $\ciconiametric_{f,a,h}$ when we do not enter in the details of case $m=2$; in general, for $m>2$, we have $a=b1_m$. Therefore, we shall not focus on the general dimension case.


We assume from now on that the manifold $M$ is orientable, so that we may start from an $\SO(m)$ structure, and thus consider case (i) in Lemma \ref{Lemmadarepresentacao}.
\begin{defi}
 Given the smooth functions $f,h,b,c:T_M\lrr\R$ and letting $a=b+ic$, the new $g$-natural metric from \eqref{Definitionciconiametric} shall be called a {\emph{ciconia}} metric.
\end{defi}%
The ciconia\footnote{We use the name ciconia (from the latin for stork) inspired by \eqref{xmetricmatrix} and the migrant bird species, abundant in three continents and believed to have an exceptional sense of orientation.} 
  metric matrix on an adapted orthonormal frame is given by
\begin{equation}\label{xmetricmatrix}
 \left[\begin{array}{cccc} f & & b &c \\  & f&-c & b \\
b & -c & h & \\ c & b & & h  \end{array}   \right].
\end{equation}

We notice that all the metrics considered are compatible with the canonical lift $\pi^*J\oplus\pi^\estrela J$ of any metric compatible almost-complex structure $J$ on $M$. This holds simply because $J$ is a vector bundle {isometry}! In particular, in dimension 2, the metric is compatible with natural 90 degree rotation. This last remark proves to be extremely helpful, as we shall see in the next section. 

The previous findings also show we may proceed with analogous definition of a ciconia metric on the total space of the tangent bundle of any given almost-Hermitian, i.e. $\Uni(m/2)$-manifold, for any even $m$. The shape of \eqref{xmetricmatrix} arising from a unitary frame remains, since the centraliser of the unitary group in $\gll(m,\R)$ is $\{b1_{m/2}+ic1_{m/2}: \  b,c\in\R\}$.
\begin{teo}
 The ciconia metric on the tangent manifold of any almost-Hermi\-ti\-an manifold is itself almost-Hermitian.
\end{teo}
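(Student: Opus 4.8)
The plan is to exhibit an explicit almost-complex structure on $T_M$ that is orthogonal for $\ciconiametric_{f,a,h}$, namely the diagonal lift $\mathcal{J}:=\pi^*J\oplus\pi^\star J$ acting on the canonical splitting $TT_M={H^{\na}}\oplus V$. Concretely $\mathcal{J}(x^h)=(Jx)^h$ and $\mathcal{J}(x^v)=(Jx)^v$, so $\mathcal{J}$ preserves the horizontal and vertical distributions and covers $J$ on each summand. Since $J$ is globally defined on the almost-Hermitian base and both lifts are natural, $\mathcal{J}$ is a globally well-defined smooth endomorphism of $TT_M$. The goal is then to check the two defining conditions of an almost-Hermitian structure: $\mathcal{J}^2=-\mathrm{id}$ and $\ciconiametric(\mathcal{J}X,\mathcal{J}Y)=\ciconiametric(X,Y)$.

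First I would verify that $\mathcal{J}$ is an almost-complex structure. Because the horizontal and vertical lifts are $\R$-linear bundle maps covering $J$ fibrewise, one has $\mathcal{J}^2(x^h)=(J^2x)^h=-x^h$ and likewise on vertical vectors; hence $\mathcal{J}^2=-\mathrm{id}_{TT_M}$. This step is immediate and uses nothing beyond $J^2=-1$ on $M$.

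The substance of the argument is the compatibility $\ciconiametric(\mathcal{J}X,\mathcal{J}Y)=\ciconiametric(X,Y)$, which I would verify summand by summand on the three terms of \eqref{Definitionciconiametric}. For the purely horizontal part $f\pi^*g$ and the purely vertical part $h\pi^\star g$, invariance is inherited directly from $g(Jx,Jy)=g(x,y)$, i.e. from $J$ being a $g$-isometry, since $\mathcal{J}$ acts as $J$ on each factor and the weights $f,h$ are scalar. The only delicate term is the mixed form $g_a$. Writing $g_a(x^h,y^v)=\pi^\star g(a(x^h),y^v)$ and recalling that $a$ carries horizontals to verticals as the complex-scalar map underlying the off-diagonal block of \eqref{xmetricmatrix}, invariance amounts to
\begin{equation}
 \pi^\star g\big(a((Jx)^h),(Jy)^v\big)=\pi^\star g\big(a(x^h),y^v\big).
\end{equation}
This is exactly where the almost-Hermitian hypothesis must enter, and where I expect the single substantive point of the proof to lie.

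To close it I would note that, as recalled just before the statement, the commutant of $\Uni(m/2)$ consists of the complex scalars $a=b1_{m/2}+ic1_{m/2}$ with $b,c\in\R$ (the analogue for $\Uni(m/2)$ of Lemma \ref{Lemmadarepresentacao}); such an $a$ therefore satisfies $aJ=Ja$. Moving $a$ through $\mathcal{J}$ by this commutation and then applying $g(Jx,Jy)=g(x,y)$ transforms the left-hand side above into the right-hand side, so $g_a$ is $\mathcal{J}$-invariant; the $\ciconiametric$-adjoint $a^\dag=\overline{a}=b1_{m/2}-ic1_{m/2}$ commutes with $J$ as well, which handles the symmetric companion identity. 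Together with the horizontal and vertical parts this yields $\ciconiametric(\mathcal{J}X,\mathcal{J}Y)=\ciconiametric(X,Y)$ on all of $TT_M$. No integrability of $J$ or of $\mathcal{J}$ is required, the claim being pointwise compatibility, so $(T_M,\ciconiametric_{f,a,h},\mathcal{J})$ is almost-Hermitian whenever $\ciconiametric_{f,a,h}$ is positive definite.
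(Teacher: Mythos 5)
Your proof is correct and follows essentially the same route as the paper, which establishes the claim in the two remarks preceding the theorem: compatibility of the diagonal lift $\pi^*J\oplus\pi^\star J$ with the horizontal and vertical blocks because $J$ is a $g$-isometry, and with the mixed block $g_a$ because $a$ lies in the commutant of $\Uni(m/2)$, hence is a complex scalar commuting with $J$. You merely make explicit the step the paper compresses into ``simply because $J$ is a vector bundle isometry'' together with the commutant remark.
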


Leaving aside the trivial endomorphisms $a$, the definition of the metric $\ciconiametric_{f,a,h}$ is entirely new in the literature, when $c\neq0$.

\subsection{Equations of ciconia metric}

\label{sec:Eotam}

The best way to grasp the geometry of ciconia metrics seems to be by referring to isothermal coordinates. Recall every Riemannian surface $M$ with H\"older continuous metric is locally conformal to the Euclidean plane, thus providing the isothermal coordinates. We proceed with these sufficiently general hypotheses, which include analytic spaces. Or rather let us assume we have a smooth metric $g$ on $M$ given locally by 
\begin{equation}
 g=\lambda\,\dx z\dx\cz ,
\end{equation}
for some real smooth function $\lambda>0$ and complex coordinates $z=x+iy$ on an open subset $\calu\subset M$.

We identify $T_zM$ with $T^{1,0}_zM=\C\partial_z$, both restricted to the set $\calu$. This linear map is defined by $\partial_x\longmapsto\partial_z,\ \partial_y\longmapsto i\partial_z$ where we denote $\partial_x=\papa{}{x}$ and $\partial_z=\papa{}{z}=\frac{1}{2}(\partial_x-i\partial_y)$. We shall also require $\partial_\cz=\overline{\partial_z}$. Clearly, the metric $g$ satisfies
\begin{equation}
 \lambda=g(\partial_x,\partial_x)=g(\partial_y,\partial_y)=2g(\partial_z,\partial_\cz).
\end{equation}

With the usual abbreviation $\na_z=\na_{\partial_z}$, we may certainly write $\na_z\partial_z=\Gamma_1\partial_z+\Gamma_2\partial_\cz$ and, due to vanishing torsion, write $\na_\cz\partial_z=\na_z\partial_\cz=\Gamma_3\partial_z+\Gamma_4\partial_\cz$, for some complex-valued functions $\Gamma_1,\ldots,\Gamma_4$ on $\calu$. Since $\na$ is a real operator, $\na_\cz\partial_\cz=\overline{\Gamma}_2\partial_z+\overline{\Gamma}_1\partial_\cz$ as well as $\Gamma_4=\overline{\Gamma}_3$. This leads to the following Proposition.
\begin{prop}\label{prop_somebasederivatives}
The Levi-Civita connection of $g$ is given by
\begin{equation}
 \na_z\dx z=-\Gamma\dx z, \qquad 
 \na_z\dx\cz=0 ,
\end{equation}
\begin{equation}\label{Gammasfinalmente}
 \Gamma:=\Gamma_1=\frac{1}{\lambda}\papa{\lambda}{z},\qquad \Gamma_2=\Gamma_3=\Gamma_4=0.
\end{equation}
\end{prop}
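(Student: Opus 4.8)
The plan is to determine the four connection coefficients $\Gamma_1,\dots,\Gamma_4$ by exploiting the two defining properties of the Levi-Civita connection — metric compatibility and vanishing torsion — directly in the complexified tangent bundle, and then to transcribe the outcome to the dual coframe $\dx z,\dx\cz$ via the induced connection on $1$-forms. The starting point is to record the metric pairings in the complex frame: extending $g$ complex-bilinearly, the isothermal expression $g=\lambda\,\dx z\,\dx\cz$ together with $g(\partial_x,\partial_x)=g(\partial_y,\partial_y)=\lambda$ and $g(\partial_x,\partial_y)=0$ gives the crucial \emph{null} relations
\[
 g(\partial_z,\partial_z)=g(\partial_\cz,\partial_\cz)=0,\qquad g(\partial_z,\partial_\cz)=\tfrac{\lambda}{2},
\]
which are exactly what makes the computation collapse.

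Next I would read off the $\Gamma_i$ from metric compatibility. Differentiating the identically-zero function $g(\partial_z,\partial_z)$ along $\partial_z$ and using compatibility yields $0=2g(\na_z\partial_z,\partial_z)=\Gamma_2\,\lambda$, whence $\Gamma_2=0$; differentiating $g(\partial_\cz,\partial_\cz)$ along $\partial_z$ gives $\Gamma_3\tfrac{\lambda}{2}=0$, so $\Gamma_3=0$, and the reality relation $\Gamma_4=\overline{\Gamma}_3$ already recorded in the text then forces $\Gamma_4=0$. Finally, differentiating $g(\partial_z,\partial_\cz)=\lambda/2$ along $\partial_z$ and inserting the coefficients just found leaves $\tfrac12\papa{\lambda}{z}=\Gamma_1\tfrac{\lambda}{2}$, that is $\Gamma_1=\tfrac1\lambda\papa{\lambda}{z}=\Gamma$. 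Torsion-freeness is used only through the identification $\na_z\partial_\cz=\na_\cz\partial_z$, which has already reduced the problem to these four functions.

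It then remains to dualise, using $(\na_z\alpha)(Y)=\partial_z(\alpha(Y))-\alpha(\na_z Y)$ on the coframe dual to $\partial_z,\partial_\cz$: evaluating $\na_z\dx z$ on $\partial_z$ gives $-\dx z(\na_z\partial_z)=-\Gamma$ and on $\partial_\cz$ gives $0$, whence $\na_z\dx z=-\Gamma\,\dx z$, while the same bookkeeping yields $\na_z\dx\cz=0$. The computation has no genuinely hard step; the only place demanding care is the complexification itself — keeping $g$ and $\na$ extended complex-bilinearly, so that the holomorphic directions are null ($g(\partial_z,\partial_z)=0$) rather than slipping into the Hermitian pairing, and handling the reality conventions $\partial_\cz=\overline{\partial_z}$ and $\Gamma_4=\overline{\Gamma}_3$ consistently. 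Once the null relations are in place, each $\Gamma_i$ falls out of a single scalar identity.
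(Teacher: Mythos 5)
Your proof is correct and follows exactly the route the paper implies (the paper states the proposition without a written proof, having already set up the $\Gamma_i$, torsion-freeness and the reality relation $\Gamma_4=\overline{\Gamma}_3$ in the preceding paragraph): the null relations $g(\partial_z,\partial_z)=g(\partial_\cz,\partial_\cz)=0$, $g(\partial_z,\partial_\cz)=\lambda/2$ plus metric compatibility give $\Gamma_2=\Gamma_3=\Gamma_4=0$ and $\Gamma_1=\frac{1}{\lambda}\papa{\lambda}{z}$, and dualising yields $\na_z\dx z=-\Gamma\dx z$, $\na_z\dx\cz=0$. No gaps.
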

Let us also recall the formulae for the curvature and sectional curvature of $M$ are given respectively by
\begin{equation}\label{curvatura1}
 R^\na(\partial_z,\partial_\cz)\partial_z  = \na_z\na_\cz\partial_z-\na_\cz\na_z\partial_z=-\papa{\Gamma}{\cz}\partial_z=-\papa{^2\log\lambda}{z\partial\cz}\partial_z ,
\end{equation}
\begin{equation}\label{curvatura2}
 K=\frac{g(R^\na(\partial_z,\partial_\cz)\partial_z,\partial_\cz)}{g(\partial_z,\partial_\cz)^2}=-\frac{2}{\lambda}\papa{\Gamma}{\cz}=-\frac{2}{\lambda}\papa{^2\log\lambda}{z\partial\cz}.
\end{equation}

Next we consider the open subset $T_\calu=\inv{\pi}(\calu)\subset T_M$. We have trivialising coordinates
\begin{equation}
 T_\calu=\{ (z,w):\ z\in\calu,\ w\in\C\}
\end{equation}
and hence, writing $w=s+it,\ s,t\in\R$, we have the tautological vector field
\begin{equation}\label{thetautologicalvectorfieldincoordinates}
  U_{(z,w)}=s\partial_s+t\partial_t=w\partial_w+\cw\partial_\cw.
\end{equation}

Clearly, isothermal coordinates $z$ on $M$ are compatible with an induced integrable complex structure, which is unique up to orientation if $M$ is orientable. Transition maps between isothermal coordinates are indeed holomorphic, as it is easy to prove. This implies that the tangent bundle $TM\lrr M$ inherits a complex structure from $M$ and the structure of a holomorphic vector bundle. In particular $T_M$ becomes a holomorphic manifold, cf. \cite{Koba1}.

It follows that $\partial_w$ generates the vertical $+i$-eigenvectors or $(1,0)$-vectors in $(TT_\calu)^\C$. Now we recall the pullback connection $\na^*$ and search for a generator of the horizontal $+i$-eigenbundle.
\begin{prop}\label{Prop_Sasakimetrichorizontalonezerovectorfield}
 We have $({H^{\na}_{(z,w)}})^{1,0}=\C X$ where
 \begin{equation}
   X=\partial_z-w\Gamma\partial_w .
 \end{equation}
\end{prop}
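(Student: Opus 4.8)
The plan is to verify directly that the proposed vector field $X$ is both horizontal and of type $(1,0)$, and then to conclude by a rank count. That $X=\partial_z-w\Gamma\partial_w$ is a $(1,0)$-vector is immediate: both $\partial_z$ and $\partial_w$ span the $(1,0)$-tangent space of the holomorphic manifold $T_\calu$, so any $\C$-linear combination of them lies in $(TT_\calu)^{1,0}$ by inspection. The substance of the argument is therefore the horizontality of $X$.

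For that I would use the defining characterization ${H^{\na}}=\ker(\pi^\star\na_\cdot U)$ and compute the covariant derivative $\pi^\star\na_X U$ of the tautological section along $X$ (this is $\na^*_X U$, since $\na^*$ restricts to $\pi^\star\na$ on $V$). The key preliminary is to read $U$ correctly as a section of the vertical bundle $V=\pi^\star TM$ rather than as the coordinate field of \eqref{thetautologicalvectorfieldincoordinates}. Under the canonical isomorphism $V\simeq\pi^\star TM$, which sends $\partial_w=(\partial_z)^v$ to $\pi^\star\partial_z$, the tautological section reads $U=w\,\pi^\star\partial_z+\cw\,\pi^\star\partial_\cz$. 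Next I would invoke the defining property of the pull-back connection, $\na^*_X(\pi^\star\sigma)=\pi^\star(\na_{\dx\pi(X)}\sigma)$, noting that the vertical part $\partial_w$ of $X$ is annihilated by $\dx\pi$, so that $\dx\pi(X)=\partial_z$. Combining this with Proposition \ref{prop_somebasederivatives}, which gives $\na_z\partial_z=\Gamma\partial_z$ and $\na_z\partial_\cz=0$, yields $\na^*_X\pi^\star\partial_z=\Gamma\,\pi^\star\partial_z$ and $\na^*_X\pi^\star\partial_\cz=0$.

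The Leibniz rule then leaves only the two coefficient derivatives $Xw=-w\Gamma$ and $X\cw=0$, the former because $\partial_z w=0$ while $\partial_w w=1$. Assembling, the term $(Xw)\,\pi^\star\partial_z=-w\Gamma\,\pi^\star\partial_z$ cancels exactly against $w\,\na^*_X\pi^\star\partial_z=w\Gamma\,\pi^\star\partial_z$, and the antiholomorphic contributions vanish, so $\pi^\star\na_X U=0$ and $X$ is horizontal. Since ${H^{\na}}$ has real rank $2$, its $(1,0)$-subbundle is a complex line, and a single nonzero horizontal $(1,0)$-field such as $X$ must generate $({H^{\na}_{(z,w)}})^{1,0}$.

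I expect no serious obstacle here; the computation is short once $U$ is read as a section of $\pi^\star TM$. The only point demanding care is the bookkeeping that the $\partial_w$-term of $X$ drops out under $\dx\pi$ yet its coefficient $-w\Gamma$ is precisely the Christoffel correction needed to absorb $\na_z\partial_z=\Gamma\partial_z$. This cancellation is the conceptual content of the statement, expressing that $X$ is nothing but the horizontal lift of the base $(1,0)$-vector $\partial_z$ under the isomorphism $\dx\pi_|:{H^{\na}}\lrr\pi^*TM$.
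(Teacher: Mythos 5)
Your proof is correct and is essentially the paper's own argument run in reverse: the paper posits a generator $\partial_z+\epsilon\partial_w$ and solves $\na^*_XU=0$ to get $\epsilon=-w\Gamma$, whereas you verify the cancellation for the given $X$, using the identical ingredients ($U=w\partial_w+\cw\partial_\cw$, the pull-back connection, and $\na_z\partial_z=\Gamma\partial_z$). The only slight imprecision is the closing rank count --- real rank $2$ alone does not force the $(1,0)$-part to be a complex line (a totally real plane has none) --- but your exhibited nonzero $X\in({H^{\na}})^{1,0}$ together with the fact that this space meets its conjugate trivially does give $\dim_\C({H^{\na}})^{1,0}=1$, so the conclusion stands.
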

\begin{proof}
  We may assume $X=\partial_z+\epsilon\partial_w$, for some $\epsilon\in\C$, is a generator of $({H^{\na}})^{1,0}$. Thus satisfying $\na^*_XU=0$. By construction, we have $B(\pi^*\partial_z)=\partial_w$. And therefore
 \begin{eqnarray*}
   \na^*_X(w\partial_w+\cw\partial_\cw)&=& \epsilon\partial_w+wB\na^*_X\pi^*\partial_z+\cw B\na^*_X\pi^*\partial_\cz \\
      &=& \epsilon\partial_w+wB(\Gamma\pi^*\partial_z ) \\
      &=& (\epsilon+w\Gamma)\partial_w 
 \end{eqnarray*}
 yields the result.
\end{proof}
In particular we see $X=\pi^*\partial_z$ is the horizontal lift of $\partial_z$. Of course, $\partial_w$ is the vertical.
Next we find a $(1,0)$-form over $T_\calu$ such that $\eta(\partial_w)=1,\ \eta(X)=0$. Clearly\footnote{We have the following result concerning transition functions.
\begin{propsemnumero}
Let $(\calu_1,z_1)$ denote another complex chart of $M$ defined on an open subset $\calu_1$ with non-empty intersection with $\calu$. Consider the corresponding chart $(z_1,w_1)$ of $T_M$. Then:
 \[ \papa{}{z_1}=\papa{z}{z_1}\papa{}{z}, \quad w_1=\papa{z_1}{z}w, \quad \lambda_1=\biggl|\papa{z}{z_1}\biggr|^2\lambda,
 \quad \Gamma_1=\papa{z}{z_1}\Gamma+\papa{z_1}{z}\papa{^2z}{{z_1}^2}, \quad \eta_1=\papa{z_1}{z}\eta . \]
 \end{propsemnumero}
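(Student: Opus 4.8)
The plan is to derive all five identities from the single structural fact, already recorded in the excerpt, that transitions between isothermal charts are holomorphic; thus $z$ is a holomorphic function of $z_1$ and $\cz$ of $\cz_1$, so that $\papa{\cz}{z_1}=0$ and all mixed (holomorphic/antiholomorphic) transition derivatives vanish. Throughout I abbreviate $\alpha=\papa{z}{z_1}$, noting $\papa{z_1}{z}=\alpha^{-1}$, and I repeatedly use $\Gamma=\papa{\log\lambda}{z}$ from Proposition \ref{prop_somebasederivatives}.

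First I would settle the three elementary identities. The frame relation $\papa{}{z_1}=\papa{z}{z_1}\papa{}{z}$ is the chain rule $\papa{}{z_1}=\papa{z}{z_1}\papa{}{z}+\papa{\cz}{z_1}\papa{}{\cz}$ with the second term killed by holomorphy. The fibre identity $w_1=\papa{z_1}{z}w$ then follows from the identification $T_zM\simeq\C\partial_z$: a tangent vector reads $w\partial_z=w\papa{z_1}{z}\partial_{z_1}=w_1\partial_{z_1}$, from which $w_1$ is read off. For $\lambda_1$ I would substitute $\dx z=\alpha\,\dx z_1$ and $\dx\cz=\overline{\alpha}\,\dx\cz_1$ into $g=\lambda\,\dx z\dx\cz=\lambda_1\,\dx z_1\dx\cz_1$ and compare coefficients, giving the factor $|\alpha|^2$.

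The two genuinely computational identities concern $\Gamma_1$ and $\eta_1$. For the Christoffel coefficient I would apply $\papa{}{z_1}$ to $\log\lambda_1=\log\alpha+\log\overline{\alpha}+\log\lambda$: the antiholomorphic term dies, $\papa{}{z_1}\log\overline{\alpha}=0$, while $\papa{}{z_1}\log\alpha=\alpha^{-1}\papa{^2z}{{z_1}^2}$ and $\papa{}{z_1}\log\lambda=\alpha\Gamma$, yielding $\Gamma_1=\alpha^{-1}\papa{^2z}{{z_1}^2}+\alpha\Gamma$, which is the stated formula. (Equivalently one transforms the defining equation $\na_{z_1}\partial_{z_1}=\Gamma_1\partial_{z_1}$ directly, using $\na_{z_1}=\alpha\na_z$ and $\partial_{z_1}=\alpha\partial_z$.) Finally, for $\eta_1$ I would first record the explicit shape $\eta=\dx w+w\Gamma\,\dx z$ forced by the conditions $\eta(\partial_w)=1$ and $\eta(X)=0$, write the analogous $\eta_1=\dx w_1+w_1\Gamma_1\,\dx z_1$ in the barred chart, and substitute $w_1=\alpha^{-1}w$, $\dx z_1=\alpha^{-1}\dx z$ together with the $\Gamma_1$-formula and $\dx w_1=\alpha^{-1}\dx w+w\,\papa{^2z_1}{z^2}\,\dx z$.

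The only real obstacle is the bookkeeping in this last step, where the second-order transition derivatives must cancel. The cancellation hinges on the relation between $\papa{^2z}{{z_1}^2}$ and $\papa{^2z_1}{z^2}$ obtained by differentiating $\papa{z}{z_1}\papa{z_1}{z}=1$; once that is in hand, the $\dx z$-terms arising from $\dx w_1$ and from $w_1\Gamma_1\,\dx z_1$ annihilate each other and leave exactly $\alpha^{-1}(\dx w+w\Gamma\,\dx z)=\papa{z_1}{z}\eta$. Every remaining manipulation is a routine application of holomorphy and the chain rule.
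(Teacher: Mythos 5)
Your argument is correct: all five identities follow, as you show, from holomorphy of the transition map, the chain rule, and the explicit formulae $\Gamma=\papa{\log\lambda}{z}$ and $\eta=w\Gamma\,\dx z+\dx w$, and the key cancellation in the $\eta_1$ computation indeed reduces to $\papa{^2z_1}{z^2}=-\bigl(\papa{z_1}{z}\bigr)^{3}\papa{^2z}{{z_1}^2}$, obtained by differentiating $\papa{z}{z_1}\papa{z_1}{z}=1$. The paper states this proposition in a footnote without proof, treating it as routine, and your derivation is exactly the computation it implicitly relies on.
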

 Certainly the 1-form $\eta$ is a covariant tensor and thus transforms like a tensor. However, $\eta$ is defined on two complex dimensions. The third and the last identity hence yield the main result of Section \ref{sec:Thenewmetric}, i.e. that $\lambda\,\dx z\ceta$ is a well-defined global tensor.},
\begin{equation}\label{theetaform}
 \eta=w\Gamma\dx z+\dx w.
\end{equation}
We remark that $\dx\eta^{(0,2)}=0$ yields the holomorphic structure of the manifold $T_M$. 

\begin{prop}
 The Sasaki metric $\ciconiametric_{1,0,1}$ over $T_\calu=\calu\times\C$ coincides with $\lambda(\dx z\dx\cz+\eta\ceta)$. More precisely, $\pi^*g=\lambda\,\dx z\dx\cz$ and $\pi^\estrela g=\lambda\,\eta\ceta$.
\end{prop}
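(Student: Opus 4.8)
My plan is to compute the two summands of $\ciconiametric_{1,0,1}=\pi^*g+\pi^\star g$ separately, working in the trivialising chart $T_\calu=\calu\times\C$ with the complexified frame $\{X,\overline{X},\partial_w,\partial_\cw\}$ adapted to the canonical splitting, where $X=\partial_z-w\Gamma\partial_w$ generates $({H^{\na}})^{1,0}$ by Proposition \ref{Prop_Sasakimetrichorizontalonezerovectorfield} and $\partial_w$ generates $V^{1,0}$. The two summands are pinned down by their defining properties: the pull-back $\pi^*g$ equals $g(\dx\pi\,\cdot,\dx\pi\,\cdot)$, hence it vanishes as soon as one argument is vertical and reproduces $g$ along the horizontal isomorphism $\dx\pi_|:{H^{\na}}\lrr\pi^*TM$; dually, the vertical lift $\pi^\star g$ is $g$ transported to the vertical factor through the identification $V\simeq\pi^\star TM$, hence it vanishes on ${H^{\na}}$ and reproduces $g$ on the vertical part. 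The whole argument reduces to exhibiting the two coframes dual to these constructions.

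For the horizontal summand the point is simply that the coordinate $z$ on $T_\calu$ is $z\circ\pi$, so that $\dx z$ and $\dx\cz$, regarded as $1$-forms on $T_\calu$, are literally the pull-backs $\pi^*\dx z$ and $\pi^*\dx\cz$. These annihilate $V=\ker\dx\pi$ and restrict to $\dx z,\dx\cz$ on ${H^{\na}}$, while $\lambda=\pi^*\lambda$ depends on $z$ alone; therefore $\pi^*g=\pi^*(\lambda\,\dx z\dx\cz)=\lambda\,\dx z\dx\cz$ immediately.

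The vertical summand carries the real content, and here lies the only obstacle: the naive candidate $\lambda\,\dx w\dx\cw$ is wrong because $\dx w$ does not annihilate the horizontal distribution, indeed $\dx w(X)=-w\Gamma\neq0$. The correct vertical coframe must be dual to the vertical lifts of the frame $\{\partial_z,\partial_\cz\}$ and, at the same time, kill ${H^{\na}}$, which is tilted away from $\mathrm{span}\{\partial_z,\partial_\cz\}$ by exactly the Christoffel term $w\Gamma$. The vertical lift of $\partial_z$ is its mirror $B(\pi^*\partial_z)=B(X)=\partial_w$, and likewise $\partial_\cz$ lifts to $\partial_\cw$, whence $\pi^\star g(\partial_w,\partial_\cw)=g(\partial_z,\partial_\cz)=\lambda/2$ while $\pi^\star g$ vanishes on $X,\overline{X}$. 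By its definition in \eqref{theetaform} the form $\eta=w\Gamma\dx z+\dx w$ satisfies precisely $\eta(\partial_w)=1$ and $\eta(X)=0$, and it clearly annihilates $\partial_\cw$ and $\overline{X}$; thus $\{\eta,\ceta\}$ is the sought coframe, dual to the vertical lift and vanishing on the horizontal distribution. Substituting into $g=\lambda\,\dx z\dx\cz$ yields $\pi^\star g=\lambda\,\eta\ceta$.

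I would close by a one-line pairing check on $\{X,\overline{X},\partial_w,\partial_\cw\}$: both $\lambda\,\eta\ceta$ and $\pi^\star g$ return $\lambda/2$ on $(\partial_w,\partial_\cw)$ and zero on every other basis pair, while both $\lambda\,\dx z\dx\cz$ and $\pi^*g$ return $\lambda/2$ on $(X,\overline{X})$ and zero otherwise, confirming $\ciconiametric_{1,0,1}=\lambda(\dx z\dx\cz+\eta\ceta)$.
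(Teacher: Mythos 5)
Your proof is correct and follows exactly the route the paper intends: the paper states this proposition without proof, treating it as immediate from the construction of $\eta$ as the $(1,0)$-coframe dual to $\partial_w$ that annihilates $X=\pi^*\partial_z$, and your verification on the adapted frame $\{X,\overline{X},\partial_w,\partial_\cw\}$ is precisely that argument made explicit. The remark that $\lambda\,\dx w\dx\cw$ fails because $\dx w(X)=-w\Gamma$ is a nice touch but not a departure from the paper's approach.
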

Next we compute a few derivatives for later purposes. First, as above and by trivial reasons, we have
\begin{equation}
  \na^*_z\partial_w=\Gamma\partial_w,\qquad
 \na^*_z\partial_\cw=\na^*_\cz\partial_w=0 ,
 \qquad \na^*_w\partial_w= \na^*_\cw\partial_w=0 .
\end{equation}
Then
\begin{equation}
 \begin{split}
 \na^*_z\partial_z &=\: \na^*_z(\pi^*\partial_z+w\Gamma\partial_w) \\
    &=\: \Gamma\pi^*\partial_z+w\papa{\Gamma}{z}\partial_w+w\Gamma^2\partial_w \\
    &=\: \Gamma\partial_z+w\papa{\Gamma}{z}\partial_w  
 \end{split}
\end{equation}
and, in the same way,
\begin{equation}
   \na^*_z\partial_\cz\: =\: \cw\papa{\overline{\Gamma}}{z}\partial_\cw, \qquad
   \na^*_\cz\partial_z\:=\:w\papa{\Gamma}{\cz}\partial_w.
\end{equation}
Easy enough,
\begin{equation}
 \na^*_w\partial_z=\Gamma\partial_w, \qquad  \na^*_\cw\partial_z=0.
\end{equation}
\begin{prop}\label{prop_somederivatives}
We have
\begin{equation}            \label{somederivatives}
 \na^*_z\begin{cases}
         \dx z \\ \dx\cz\\ \dx w\\ \dx \cw  \end{cases}=\ \begin{cases}-\Gamma\dx z \\ 0 \\-w\papa{\Gamma}{z}\dx z-\Gamma\dx w \\-\cw\papa{\overline{\Gamma}}{z}\dx\cz \end{cases}
  \qquad\qquad
   \na^*_w\begin{cases}
         \dx z \\ \dx\cz\\ \dx w\\ \dx \cw  \end{cases}=\ \begin{cases} 0 \\ 0 \\-\Gamma\dx z \\ 0 \end{cases} .         
\end{equation}
\end{prop}
Checking $\na^*(\lambda\,\dx z\dx\cz)=\na^*\pi^*g=0$, as expected, is now a simple exercise.

Regarding the vertical part, we start by
\begin{equation}  \label{somemorederivatives1}
 \na^*_z\eta=\na^*_z(w\Gamma\dx z+\dx w)=w\papa{\Gamma}{z}\dx z-w\Gamma^2\dx z-w\papa{\Gamma}{z}\dx z-\Gamma\dx w=-\Gamma\eta .
\end{equation}
Further we find
\begin{equation} \label{somemorederivatives2}
\begin{split}
 & \na^*_\cz\eta=\na^*_\cz(w\Gamma\dx z+\dx w)=w\papa{\Gamma}{\cz}\dx z-w\papa{\Gamma}{\cz}\dx z=0, \\
 &\na^*_w\eta=\Gamma\dx z-\Gamma\dx z=0,\qquad\qquad \na^*_\cw\eta=\na^*_w\ceta=0.
\end{split}
\end{equation}
And thus the identity $\na^*(\lambda\,\eta\ceta)=\na^*\pi^\estrela g=0$ follows, again just as expected.

We remark that the torsion of the metric connection $\na^*$ is proportional to $\pi^*R^\na U$.

The ciconia metric with weights $f,a,h\in\cinf{}(T_M;\C)$, where $f,h$ are real, may now be introduced in coordinates:
\begin{equation}\label{Definitionmetric_G_incomplexcoord}
 \ciconiametric_{f,a,h}=\frac{\lambda}{2}(f\,\dx z\cdot\dx\cz
             +a\,\dx z\cdot\ceta+\ca\,\eta\cdot\dx\cz+h\,\eta\cdot\ceta ) .
\end{equation}
We use momentarily $\alpha\cdot\beta=\alpha\otimes\beta+\beta\otimes\alpha$, so that for instance $g=\frac{\lambda}{2}\,\dx z\cdot\dx\cz$. Notice the middle term $g_a=\frac{\lambda}{2}(a\,\dx z\cdot\ceta+\ca\,\dx\cz\cdot\eta)$ is also parallel for $\na^*$ when $a$ is constant: indeed, by (\ref{somederivatives}--\ref{somemorederivatives2}),
\begin{equation}
 \na^*(\lambda\,\dx z\cdot\ceta)=0 \qquad \mbox{and}\qquad \na^*(\lambda\,\eta\cdot\dx\cz)=0.
\end{equation}
This is just as predicted by the theory: the invariance of $g_a$ under the diagonal representation of $\SO(2)=\Uni(1)$ and the reduction of $\na^*$ as a linear connection on $T_M$, as mentioned, in general with torsion. As observed in Section \ref{sec:Introduction}, every ciconia metric $\ciconiametric_{f,a,h}$ defined on $T_M$ is compatible with the underlying  holomorphic structure.

\section{First developments}

\subsection{K\"ahlerian and pseudo-K\"ahlerian ciconia metrics}

\label{sec:Km}

Let us recall that a pseudo-Hermitian structure or metric on a complex manifold is given by a $\C$-linear map (the complex structure on $T^{0,1}$ is multiplication by $-i$, so that $-i\overline{u}=\overline{iu}$)
\begin{equation}
 H:T^{1,0}\otimes T^{0,1}\lrr \C \ \ \mbox{such that}\ \ H(\overline{v},\overline{u})=\overline{H(u, v)},\ \forall u\in T^{1,0},v\in T^{0,1} .
\end{equation}
On a chart $(z^j)_{j=1,\ldots,\dim M}$, a pseudo-Hermitian metric appears as $\sum h_{jk}\dx z^j\otimes\dx\cz^k$ with $h_{jk}=\overline{h}_{kj}$. Then $H$ is compatible with the complex structure and the same is true for the real and imaginary parts of $H$. The real part $\Re H$ is the associated pseudo-Riemannian metric and minus the imaginary part $-\Im H=\Re( iH)$ is the associated symplectic 2-form.

Resuming with the analysis of a given ciconia metric on the complex manifold $T_M$, let us consider the pseudo-Hermitian structure $H_a$ given locally by
\begin{equation}
 H_a=\lambda a\,\dx z\otimes\ceta+\lambda\ca\,\eta\otimes\dx\cz.
\end{equation}
$H_a$ is the pseudo-Hermitian structure associated to $g_a$ introduced in \eqref{Definitionmetric_g_a}. Indeed, $g_{0,a,0}$ from  \eqref{Definitionmetric_G_incomplexcoord} agrees with $g_a=\Re H_a=(H_a+\overline{H}_a)/2$. Now we have the symplectic form $g_a(i\ ,\ )$
\begin{equation}
\begin{split}
  -\Im H_a &= \frac{i\lambda}{2}(a\,\dx z\otimes\ceta+\ca\,\eta\otimes\dx\cz
              -\ca\,\dx \cz\otimes\eta-a\,\ceta\otimes\dx z) \\
     &= \frac{i\lambda}{2}(a\,\dx z\wedge\ceta-\ca\,\dx \cz\wedge\eta) .
\end{split}
\end{equation}

Finally we have the symplectic 2-form of a ciconia metric $\ciconiametric_{f,a,h}$:
\begin{equation}
 \omega_{f,a,h}=\frac{i\lambda}{2}(f\,\dx z\wedge\dx\cz+a\,\dx z\wedge\ceta+\ca\,\eta\wedge\dx\cz+h\,\eta\wedge\ceta).
\end{equation}
\begin{prop}\label{prop_omegaclosedequation}
 $\omega_{f,a,h}$ is closed if and only if 
 \begin{equation} \label{omegaclosedequation}
  \begin{cases}
    \dpapa{h}{z}=\dpapa{a}{w}+w\Gamma\dpapa{h}{w}  \vspace*{4mm} \\
   \dpapa{f}{w}+w\Gamma\dpapa{\ca}{w}=\dpapa{\ca}{z}+\cw h\dpapa{\Gamma}{\cz} .
  \end{cases}
 \end{equation}
\end{prop}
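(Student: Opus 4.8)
The plan is to compute $\dx\omega_{f,a,h}$ directly in the coframe $\{\dx z,\dx\cz,\eta,\ceta\}$ adapted to the holomorphic structure, and to read off its vanishing slot by slot. First I would record the structure equations of this coframe. Since $\Gamma=\frac1\lambda\papa{\lambda}{z}$ is a function of $(z,\cz)$ alone and $\eta=w\Gamma\,\dx z+\dx w$, substituting $\dx w=\eta-w\Gamma\,\dx z$ gives in one line
\[
 \dx\eta=-\Gamma\,\dx z\wedge\eta-w\,\papa{\Gamma}{\cz}\,\dx z\wedge\dx\cz,
\]
together with the conjugate expression for $\dx\ceta$; note this already exhibits $\dx\eta^{(0,2)}=0$. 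Next, for an arbitrary smooth function $\phi$ on $T_\calu$ I would expand $\dx\phi$ in the same coframe. Eliminating $\dx w,\dx\cw$ in favour of $\eta,\ceta$ reproduces exactly the horizontal field $X=\partial_z-w\Gamma\partial_w$ of Proposition~\ref{Prop_Sasakimetrichorizontalonezerovectorfield}, yielding the convenient formula $\dx\phi=(X\phi)\,\dx z+(\overline{X}\phi)\,\dx\cz+\papa{\phi}{w}\,\eta+\papa{\phi}{\cw}\,\ceta$, where $\overline{X}=\partial_\cz-\cw\cgamma\,\partial_\cw$.

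Then I would apply the Leibniz rule to the four summands of $\omega_{f,a,h}=\frac{i\lambda}{2}(f\,\dx z\wedge\dx\cz+a\,\dx z\wedge\ceta+\ca\,\eta\wedge\dx\cz+h\,\eta\wedge\ceta)$, substitute the structure equations for $\dx\eta,\dx\ceta$ and the expansion of $\dx\phi$ above, and collect everything in the basis $\{\dx z\wedge\dx\cz\wedge\eta,\ \dx z\wedge\dx\cz\wedge\ceta,\ \dx z\wedge\eta\wedge\ceta,\ \dx\cz\wedge\eta\wedge\ceta\}$ of $3$-forms on the $4$-manifold $T_\calu$. Because $\omega_{f,a,h}$ is a real form, $\dx\omega_{f,a,h}$ is real as well, and under conjugation these four basis $3$-forms are permuted in two conjugate pairs; hence the four scalar coefficients satisfy two conjugation relations, and $\dx\omega_{f,a,h}=0$ is equivalent to the vanishing of only two of them --- one complex equation from the $\dx z\wedge\eta\wedge\ceta$ slot and one from the $\dx z\wedge\dx\cz\wedge\eta$ slot. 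This is precisely what yields the two equations of \eqref{omegaclosedequation}, their conjugates being automatic.

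Finally I would simplify the two retained coefficients. In the $\dx z\wedge\eta\wedge\ceta$ slot the only metric derivative appearing is $X(\lambda h)$; expanding it and using $\papa{\lambda}{z}=\lambda\Gamma$ makes the term $\lambda\Gamma h$ cancel exactly against the $-\lambda h\Gamma$ produced by $\dx\eta$, leaving $\lambda(\papa{h}{z}-\papa{a}{w}-w\Gamma\papa{h}{w})$ and hence the first equation $\papa{h}{z}=\papa{a}{w}+w\Gamma\papa{h}{w}$. The $\dx z\wedge\dx\cz\wedge\eta$ slot behaves the same way, with the $\lambda\Gamma\ca$ terms cancelling; to bring it into the stated shape one needs the identity $\papa{\cgamma}{z}=\papa{\Gamma}{\cz}$, which follows from $\cgamma=\frac1\lambda\papa{\lambda}{\cz}$ and the reality of $\lambda$. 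The main obstacle is organisational rather than conceptual: keeping the signs and orderings of the many wedge terms under control, and verifying that all the $\Gamma$-linear pieces generated by differentiating the conformal factor $\lambda$ cancel against those coming from the non-closedness of $\eta$. Once the two facts $\papa{\lambda}{z}=\lambda\Gamma$ and $\papa{\cgamma}{z}=\papa{\Gamma}{\cz}$ are in hand, no genuinely difficult analysis remains.
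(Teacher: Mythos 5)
Your proposal is correct, and I checked the slot-by-slot coefficients: in the coframe $\{\dx z,\dx\cz,\eta,\ceta\}$ the coefficient of $\dx z\wedge\eta\wedge\ceta$ is $\lambda\bigl(\papa{h}{z}-w\Gamma\papa{h}{w}-\papa{a}{w}\bigr)$ and that of $\dx z\wedge\dx\cz\wedge\eta$ is $\lambda\bigl(\papa{f}{w}+w\Gamma\papa{\ca}{w}-\papa{\ca}{z}-\cw h\papa{\Gamma}{\cz}\bigr)$, with the remaining two slots conjugate to these, so the two equations of \eqref{omegaclosedequation} do come out exactly. The underlying strategy is the same as the paper's (compute $\dx\omega_{f,a,h}$ and set each component to zero), but the organization is genuinely different. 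The paper first rewrites $\eta\wedge\ceta$, $\dx\eta$, $\dx\ceta$ entirely in the coordinate coframe $\dx z,\dx\cz,\dx w,\dx\cw$, which produces the cross-terms $h\lambda|w|^2|\Gamma|^2$, $a\lambda\cw\cgamma$, etc., and then needs the identity $\papa{\Gamma}{\cz}=\frac{1}{\lambda}\papa{^2\lambda}{z\partial\cz}-\Gamma\cgamma$ at the very end to untangle the second equation. You instead keep $\eta,\ceta$ as coframe elements, encode all the information in the structure equation $\dx\eta=-\Gamma\,\dx z\wedge\eta-w\papa{\Gamma}{\cz}\dx z\wedge\dx\cz$ and in the expansion $\dx\phi=(X\phi)\dx z+(\overline{X}\phi)\dx\cz+\papa{\phi}{w}\eta+\papa{\phi}{\cw}\ceta$, and invoke reality of $\omega_{f,a,h}$ to discard two of the four component equations a priori. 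This buys shorter intermediate expressions, makes the cancellation of all $\Gamma$-linear terms (via $\papa{\lambda}{z}=\lambda\Gamma$) structurally transparent, and only requires the elementary identity $\papa{\cgamma}{z}=\papa{\Gamma}{\cz}$; the price is having to derive the structure equations first, which the paper essentially computes anyway. Either route is complete and rigorous.
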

\begin{proof}
 Since $\eta=w\Gamma\dx z+\dx w,\ \ceta=\cw\cgamma\dx\cz+\dx\cw$, we find
 \begin{eqnarray*}
  \eta\wedge\ceta &=& |w|^2|\Gamma|^2\dx z\wedge\dx\cz+w\Gamma\dx z\wedge\dx\cw-\cw\cgamma\dx\cz\wedge\dx w+\dx w\wedge\dx\cw,\\
  \dx\eta &=& \Gamma\dx w\wedge\dx z-w\papa{\Gamma}{\cz}\dx z\wedge\dx\cz, \\
  \dx\ceta &=& \cgamma\dx\cw\wedge\dx\cz+\cw\papa{\cgamma}{z}\dx z\wedge\dx\cz .
 \end{eqnarray*}
 Then
 \begin{eqnarray*}
  -2i\omega_{f,a,h} &=& (f\lambda+h\lambda|w|^2|\Gamma|^2+a\lambda\cw\cgamma+\ca\lambda w\Gamma)\dx z\wedge\dx\cz+(a\lambda+h\lambda w\Gamma)\dx z\wedge\dx\cw \\
  & &\ \ +(-\ca\lambda-h\lambda\cw\cgamma)\dx\cz\wedge\dx w+h\lambda\dx w\wedge\dx\cw  \\
 &=& \lambda(f+hw\cw\Gamma\cgamma+a\cw\cgamma+\ca w\Gamma)\dx z\wedge\dx\cz+(a\lambda+h w\papa{\lambda}{z})\dx z\wedge\dx\cw \\
  & &\ \ -(\ca\lambda+h\cw\papa{\lambda}{\cz})\dx\cz\wedge\dx w+h\lambda\dx w\wedge\dx\cw . \end{eqnarray*}
 Now, recalling $\lambda$ is positive and depends only of $z$,
 \begin{eqnarray*}
 \lefteqn{ -2i\dx\omega_{f,a,h} \,= } \\
 &=& \lambda(\papa{f}{w}+\papa{h}{w} w\cw\Gamma\cgamma+h\cw\Gamma\cgamma+\papa{a}{w}\cw\cgamma+\papa{\ca}{w}w\Gamma+\ca\Gamma)\dx z\wedge\dx\cz\wedge\dx w+ \\
  & & + \lambda(\papa{f}{\cw}+\papa{h}{\cw} w\cw\Gamma\cgamma+hw\Gamma\cgamma+\papa{a}{\cw}\cw\cgamma+a\cgamma+\papa{\ca}{\cw}w\Gamma)\dx z\wedge\dx\cz\wedge\dx\cw \\
  & & -(\papa{a}{\cz}\lambda+a\papa{\lambda}{\cz}+w\papa{h}{\cz}\papa{\lambda}{z}+hw\papa{^2\lambda}{\cz\partial z})\dx z\wedge\dx\cz\wedge\dx\cw \\
  & & -(\papa{a}{w}\lambda+w\papa{h}{w}\papa{\lambda}{z}+h\papa{\lambda}{z})\dx z\wedge\dx w\wedge\dx\cw \\ 
  & & -(\papa{\ca}{z}\lambda+\ca\papa{\lambda}{z}+\cw\papa{h}{z}\papa{\lambda}{\cz}+h\cw\papa{^2\lambda}{z\partial\cz})\dx z\wedge\dx\cz\wedge\dx w \\
  & & -(\papa{\ca}{\cw}\lambda+\cw\papa{h}{\cw}\papa{\lambda}{\cz}+h\papa{\lambda}{\cz})\dx\cz\wedge\dx w\wedge\dx\cw \\ 
  & & +(\papa{h}{z}\lambda+h\papa{\lambda}{z})\dx z\wedge\dx w\wedge\dx\cw +(\papa{h}{\cz}\lambda+h\papa{\lambda}{\cz})\dx\cz\wedge\dx w\wedge\dx\cw .
 \end{eqnarray*}
 Thus $\omega_{f,a,h}$ is closed if and only if the following system is satisfied:
 \begin{equation*}
  \begin{cases}
   \papa{h}{z}\lambda-\papa{a}{w}\lambda-w\papa{h}{w}\papa{\lambda}{z}=0  \vspace{1mm}\\
   \lambda\bigl(\papa{f}{w}+w\cw\Gamma\cgamma\papa{h}{w}+\cw\Gamma\cgamma h +\cw\cgamma\papa{a}{w}+w\Gamma\papa{\ca}{w}+\ca\Gamma\bigr)   = \vspace{1mm} \\
   \hspace{45mm} =\papa{\ca}{z}\lambda+\ca\papa{\lambda}{z}+\cw\papa{h}{z}\papa{\lambda}{\cz}+h\cw\papa{^2\lambda}{z\partial\cz} .
  \end{cases}
 \end{equation*}
 Equivalently, 
 \begin{equation*}
  \begin{cases}
   \papa{h}{z}=\papa{a}{w}+w\Gamma\papa{h}{w} \vspace{1mm}\\
   \papa{f}{w}+\cw\Gamma\cgamma h+\cw\cgamma\papa{h}{z}+w\Gamma\papa{\ca}{w}+\ca\Gamma
   =\papa{\ca}{z}+\ca\Gamma+\cw\papa{h}{z}\cgamma+ h\cw\papa{^2\lambda}{z\partial\cz}\frac{1}{\lambda} .
  \end{cases}
 \end{equation*}
 Since
 \[  \papa{\Gamma}{\cz}=\papa{ }{\cz}(\papa{\lambda}{z}\frac{1}{\lambda})=\papa{^2\lambda}{z\partial\cz}\frac{1}{\lambda}-\Gamma\cgamma , \]
 another substitution in the previous equation yields the result.
\end{proof}


$T_M$ is always a complex analytic manifold associated to the Riemann surface $M$, endowed with a smooth Riemannian structure. With respect to the Gray-Hervella classification of Hermitian 4-manifolds, one may only distinguish further the metrics which are K\"ahler or pseudo-K\"ahler. It is well-know the condition for these is the same: $\dx\omega_{f,a,h}=0$.

Two sets of $\C$-valued functions on $T_M$ are worth considering in order to reduce the indeterminacy of \eqref{omegaclosedequation}. The first, $\cinf{\calu,\pi}$, is the set of functions which are the pullback by $\pi$ of functions on $M$, i.e. functions which depend only of $z$. The second set, denoted $\cinf{r^2}=\cinf{[0,+\infty[}(\C)$, where $r^2=r^2(u)=g(u,u)=\lambda|w|^2,\ u\in T_M$, is the set of functions $\varphi$ on $\inv{\pi}(\calu)$ which depend only of $r^2$ and have derivatives $\varphi',\varphi'',\ldots$ at 0 (n.b.: we let $\varphi'=\dx\varphi/\dx r^2$).

Let us find first which ciconia metrics are K\"ahler, separately from the pseudo-K\"ahler. By Proposition \ref{prop_signatureofciconiametric}, this means that $f,h$ are real and  $f,fh-|a|^2>0$. Recall from \eqref{curvatura2} the notation by $K$ for the Gauss curvature of $(M,g)$.
\begin{teo}\label{teo_Kahlerciconia}
 Suppose a given ciconia metric $\ciconiametric_{f,a,h}$ is K\"ahler with weight functions of any of the two types above. We have that:\\
 (i) if $f,a,h\in\cinf{M,\pi}$, then $K=0$, $a$ is holomorphic and $h$ is constant;\\
 (ii) if $f,h\in\cinf{M,\pi}$ and $a\in\cinf{r^2}$, then $K=0$ and $a,h$ are constant;\\
 (iii) if $f,a\in\cinf{M,\pi}$ and $h\in\cinf{r^2}$, then $K=0$ and $a$ is holomorphic;\\
 (iv) if $a,h\in\cinf{M,\pi}$ and $f\in\cinf{r^2}$, then $f(r^2)=f_1r^2+f_0$, $K=-\frac{2f_1}{h}$, $h,f_0,f_1$ are constant and $a$ is holomorphic;\\
  (v) if $f\in\cinf{M,\pi}$ and $a,h\in\cinf{r^2}$, then $K=0$ and $a$ is constant;\\
 (vi) if $a\in\cinf{M,\pi}$ and $f,h\in\cinf{r^2}$, then $K=-\frac{2f'}{h}$ and $a$ is holomorphic;\\
 (vii) if $h\in\cinf{M,\pi}$ and $f,a\in\cinf{r^2}$, then  $f(r^2)=f_1r^2+f_0$, $K=-\frac{2f_1}{h}$, $a,h,f_0,f_1$ are constant;\\
 (viii) if $f,a,h\in\cinf{r^2}$, then $K=-\frac{2f'}{h}$ and $a$ is constant.
 
 Reciprocally, any of the conditions above imply the metric is K\"ahler.
\end{teo}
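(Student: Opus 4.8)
The plan is to read the statement directly off the closedness system \eqref{omegaclosedequation}, since on the complex manifold $T_M$ a compatible (Hermitian) ciconia metric is K\"ahler exactly when its fundamental form satisfies $\dx\omega_{f,a,h}=0$, and by Proposition \ref{prop_omegaclosedequation} this is equivalent to \eqref{omegaclosedequation}. First I would record the derivatives of the two admissible function types. A function $\varphi\in\cinf{M,\pi}$ depends only on $z,\cz$, so $\papa{\varphi}{w}=\papa{\varphi}{\cw}=0$. A function $\varphi\in\cinf{r^2}$ depends on $z,w,\cw$ only through $r^2=\lambda w\cw$; using $\papa{\lambda}{z}=\lambda\Gamma$ one gets $\papa{\varphi}{w}=\varphi'\lambda\cw$ and, crucially, the identity $\papa{\varphi}{z}=w\Gamma\papa{\varphi}{w}$. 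I would also rewrite the curvature coefficient in \eqref{omegaclosedequation} by means of \eqref{curvatura2} as $\papa{\Gamma}{\cz}=-\frac{K\lambda}{2}$.

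With these preparations the two equations separate along three independent dichotomies, so rather than grinding through the eight cases literally I would argue by which class each of $f,a,h$ lies in. The first equation $\papa{h}{z}=\papa{a}{w}+w\Gamma\papa{h}{w}$ governs $a$ and $h$: if $h\in\cinf{M,\pi}$ then $\papa{h}{w}=0$ and evaluation at $w=0$ forces $\papa{h}{z}=0$, so $h$ is constant (and if in addition $a\in\cinf{r^2}$, the surviving term $\papa{a}{w}=a'\lambda\cw$ then forces $a'=0$); if instead $h\in\cinf{r^2}$ the identity $\papa{h}{z}=w\Gamma\papa{h}{w}$ collapses the equation to $\papa{a}{w}=0$, which is vacuous when $a\in\cinf{M,\pi}$ and forces $a$ constant when $a\in\cinf{r^2}$. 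The second equation $\papa{f}{w}+w\Gamma\papa{\ca}{w}=\papa{\ca}{z}+\cw h\papa{\Gamma}{\cz}$ I would handle by first setting $w=0$, which kills every term carrying a factor $w$ or $\cw$ and leaves $0=\papa{\ca}{z}$; so whenever $a\in\cinf{M,\pi}$ we obtain $\papa{\ca}{z}=0$, i.e.\ $a$ is holomorphic.

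By now $a$ is holomorphic or constant in every case, so both $\ca$-derivatives drop out and the second equation reduces to $\papa{f}{w}=\cw h\papa{\Gamma}{\cz}$, which reads $\cw\bigl(f'\lambda-h\papa{\Gamma}{\cz}\bigr)=0$ if $f\in\cinf{r^2}$ and $\cw\,h\papa{\Gamma}{\cz}=0$ if $f\in\cinf{M,\pi}$; this is where the curvature enters. In the latter situation the K\"ahler hypothesis combined with Proposition \ref{prop_signatureofciconiametric} gives $f>0$ and $fh-|a|^2>0$, hence $h>0$, so we may divide by $h$ and conclude $\papa{\Gamma}{\cz}=0$, that is $K=0$. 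In the former situation we obtain $f'\lambda=h\papa{\Gamma}{\cz}$, equivalently $K=-\frac{2f'}{h}$; in cases (iv) and (vii), where $h\in\cinf{M,\pi}$ has already been shown constant, this equates a function of $r^2$ (namely $f'$) with a function of $z$ alone, and since for a fixed base point $r^2$ sweeps all of $[0,+\infty)$ both sides must be constant, giving $f=f_1r^2+f_0$ and $K=-\frac{2f_1}{h}$; in cases (vi) and (viii) the bare relation $K=-\frac{2f'}{h}$ is recorded directly. The converse is then immediate: substituting each listed set of conditions back into \eqref{omegaclosedequation} makes both equations hold identically, so $\omega_{f,a,h}$ is closed and the metric is K\"ahler.

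The main obstacle is purely the bookkeeping in the second equation: because $\cinf{r^2}$ functions depend on both $w$ and $\cw$, one cannot naively equate coefficients of powers of $w,\cw$, and the argument only becomes clean after the two-step reduction, namely evaluate at $w=0$ to peel off $\papa{\ca}{z}$, then factor out the common $\cw$. Keeping track of which terms survive in each of the three dichotomies, and invoking the positivity $h>0$ to pass from $h\papa{\Gamma}{\cz}=0$ to $K=0$, are the only points where care is genuinely required.
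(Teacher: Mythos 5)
Your proposal is correct and follows essentially the same route as the paper, which likewise reduces everything to the system of Proposition \ref{prop_omegaclosedequation} together with the key identity $\papa{\varphi}{z}=w\Gamma\papa{\varphi}{w}$ for $\varphi\in\cinf{r^2}$ and then runs the case analysis (the paper only sketches this and leaves the details to the reader). Your evaluation at $w=0$ followed by cancelling the common factor $\cw$ is a clean substitute for the paper's suggestion to differentiate the system twice in case (vi), and the rest of your bookkeeping, including the use of $h>0$ from positive definiteness, is sound.
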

\begin{proof}
We give some starting details of the proof and leave the others to the reader.
Since $r^2=\pi^\estrela g(U,U)$, where $U$ is the tautological section, applying $\na^*$ we find $\dx r^2(x)=2\pi^\estrela g(U,\na^*_xU)=\lambda(\eta\otimes\ceta+\ceta\otimes\eta)(w\partial_w+\cw\partial_\cw,x^v)=\lambda (w\ceta+\cw\eta)(x)$, $\forall x\in T(T_M)$; this is just a coherent, albeit complicated, way of proving a formula for $\dx r^2$ where $r^2=\lambda w\cw$. Immediately, any function $\varphi\in\cinf{r^2}$ satisfies
\[ \papa{\varphi}{w}= \varphi'\lambda\cw, 
          \qquad\papa{\varphi}{z}=\varphi'\papa{\lambda}{z}w\cw . \]
These imply $\papa{\varphi}{z}=w\Gamma\papa{\varphi}{w}$. Using this either on $\overline{a}$, on $h$, or on both, the proof continues with the analysis of the system for $\dx\omega_{f,a,h}=0$ in Proposition \ref{prop_omegaclosedequation}. 

 We show case (vi) since this is the most exemplary of all $2^3$ cases and since it is the hardest to see. We have $\papa{a}{w}=\papa{a}{\cw}=0$. For $f$ or $h$ functions of $r^2=\lambda|w|^2$ we have the seen identities
\[ \papa{f}{w}=f'\lambda\cw,\qquad\papa{f}{z}=f'\papa{\lambda}{z}|w|^2\]
 which imply $\papa{f}{z}=w\Gamma\papa{f}{w}$. 
 
 System \eqref{omegaclosedequation} hence becomes equivalent to a single equation:
 \[  f'\lambda \cw=\papa{\ca}{z}+\cw h\papa{\Gamma}{\cz}  .  \qquad\qquad (*) \]
 Then differentiating this equation with respect to $w$ yields $f''\lambda^2\cw^2=\cw^2h'\lambda\papa{\Gamma}{\cz}$, $\forall w\in\C$, i.e. $f''\lambda=h'\papa{\Gamma}{\cz}$. 
 Taking the same equation (*) and differentiating with respect to $\cw$ yields
 \[ f''\lambda^2|w|^2+f'\lambda=h\papa{\Gamma}{\cz}+|w|^2\lambda h'\papa{\Gamma}{\cz}  .\] 
 With the input of the previous identity, we find $f'\lambda=h\papa{\Gamma}{\cz}$. This is the desired equation $-2f'=Kh$. Looking up again on (*) we find the desired result $\papa{a}{\cz}=0$. The previous conclusions also follow immediately from $w=0$ in (*), but we wish to avoid such argument.
\end{proof}

The result is indeed global, although the system of PDE is local. Notice we end up with the following two inclusion diagrams
\begin{equation}
 \mbox{(ii)}\Rightarrow\mbox{(i)}\Rightarrow\mbox{(iii)}\Leftarrow\mbox{(v)}\qquad\qquad
 \mbox{(vii)}\Rightarrow\mbox{(iv)}\Rightarrow\mbox{(vi)}\Leftarrow\mbox{(viii)} .
\end{equation}
In sum, each and every of the above cases is described by (iii) or (vi).

We observe, as supplement of case (iv) of the Theorem, that if $K>0$, then we have $f_1<0,\ f_0>0,\ r^2\in[0,-\frac{f_0}{f_1}[$ and so the ciconia metric is defined only on a disk-bundle and is non-complete (on the fibres the metric is Euclidean because $h$ is constant); if $K=0$, then $f=f_0$ and $a$ is bounded (hence constant if $M$ is compact); finally, if $K<0$, then we deduce $f_1>0$. Now we also observe case (iii) returns to case (i) once we ask that $fh-|a|^2$ should be a constant: because $h$ cannot vary along the fibres. Hence the following example is surprisingly interesting here.

\begin{prop}\label{prop_nice_ciconia_metric}
For any open set $\calu\subset\C$ and any $a\in\cinf{\calu}$ holomorphic in $z$, the Hermitian metric on $\calu\times\C$
\begin{equation}
H=(1+|a|^2)\dx z\otimes\dx\cz+a\dx z\otimes\dx\cw+\ca\dx w\otimes\dx\cz+\dx w\otimes\dx\cw 
\end{equation}
is K\"ahler and flat.
\end{prop}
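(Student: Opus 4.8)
The plan is to realise the metric, call it $\ciconiametric$, as the pull-back of the flat K\"ahler metric of $\C^2$ under an explicit biholomorphism, which disposes of both claims simultaneously. As a Hermitian form in the holomorphic coframe $(\dx z,\dx w)$ the metric has Gram matrix $\left[\begin{smallmatrix} 1+|a|^2 & a\\ \ca & 1\end{smallmatrix}\right]$, so the first step is to complete the square by means of the factorisation
\[
 \left[\begin{array}{cc} 1+|a|^2 & a\\ \ca & 1\end{array}\right]
 =\left[\begin{array}{cc} 1 & a\\ 0 & 1\end{array}\right]\left[\begin{array}{cc} 1 & 0\\ \ca & 1\end{array}\right] .
\]
This exhibits the unitary coframe $\theta^1=\dx z$, $\theta^2=\dx w+a\,\dx z$, for which $\ciconiametric=\theta^1\otimes\overline{\theta^1}+\theta^2\otimes\overline{\theta^2}$; indeed $\theta^2\otimes\overline{\theta^2}=|a|^2\,\dx z\otimes\dx\cz+a\,\dx z\otimes\dx\cw+\ca\,\dx w\otimes\dx\cz+\dx w\otimes\dx\cw$, and adding $\theta^1\otimes\overline{\theta^1}=\dx z\otimes\dx\cz$ recovers the stated expression (the order of the factors in the mixed term being immaterial for the Hermitian form).

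The decisive observation is that both members of this coframe are \emph{closed holomorphic} $1$-forms. Since $a$ is holomorphic in $z$ one has $\dx a=a'\,\dx z$, hence $\dx\theta^1=0$ and $\dx\theta^2=a'\,\dx z\wedge\dx z=0$. Closedness of $\theta^1,\theta^2$ already gives $\dx\omega=0$ for the K\"ahler form $\omega=\tfrac{i}{2}(\theta^1\wedge\overline{\theta^1}+\theta^2\wedge\overline{\theta^2})$, so $\ciconiametric$ is K\"ahler; equivalently this is case (i) of Theorem \ref{teo_Kahlerciconia}, the weights $f=1+|a|^2$, $h=1$ and $a$ being holomorphic functions of $z$ over a flat base ($K=0$). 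To reach flatness I would integrate the coframe: picking locally a holomorphic primitive $A$ with $A'=a$ on $\calu$ and setting
\[
 u=z,\qquad v=w+A(z) ,
\]
produces a biholomorphism of $\calu\times\C$ onto an open subset of $\C^2$ with $\dx u=\theta^1$, $\dx v=\theta^2$. Thus $\ciconiametric=\dx u\otimes\dx\overline{u}+\dx v\otimes\dx\overline{v}$ is the pull-back of the standard metric of $\C^2$; being a biholomorphic isometry onto an open subset of flat $\C^2$, it forces $\ciconiametric$ to be K\"ahler and flat.

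I expect the only genuinely delicate point to be the very first one: one must complete the square with the \emph{holomorphic} combination $\dx w+a\,\dx z$, and not with $\dx w+\ca\,\dx z$, for it is exactly the holomorphicity of $a$ that renders $\theta^2$ closed and hence locally exact; the other choice would still diagonalise $\ciconiametric$ but yield a non-closed coframe and no flat chart. A secondary, purely local, caveat concerns the primitive $A$, which exists on any simply connected part of $\calu$; this suffices because flatness is a local property, and on a general $\calu$ one either argues chart by chart or, equivalently, computes that the curvature of the Levi-Civita connection in the closed unitary coframe $\{\theta^1,\theta^2\}$ vanishes identically. Finally it is worth recording the consistency check $fh-|a|^2=(1+|a|^2)\cdot 1-|a|^2=1>0$, so that by Proposition \ref{prop_signatureofciconiametric} the metric is genuinely positive definite, as a K\"ahler example must be.
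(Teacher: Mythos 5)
Your argument is correct and coincides with the paper's first proof, which likewise observes that the metric is the pull-back of the flat metric on $\C^2$ under the biholomorphism $(z,w)\mapsto(z,w+\int a\,\dx z)$; your unitary coframe $\theta^1=\dx z$, $\theta^2=\dx w+a\,\dx z$ and the remark on local primitives simply make that one-line observation explicit. (The paper also records a second proof via the Chern connection $\tilde{\omega}^t=(\partial H)\inv{H}$, but nothing in your route is missing.)
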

\begin{proof}
 We give two proofs. For the first, we just notice the biholomorphism
 \[ F(z,w)=(z,w+\int a(z)\dx z) \]
 from $T_\calu=\calu\times\C$ with the given metric $H$ onto itself with the canonical metric. Finding such isometric biholomorphism just conforms with the classification of Hermitian symmetric domains. A second proof goes as follows. Recalling the unique associated Hermitian connection on $TT_\calu\rr T_\calu$ whose $(0,1)$-part coincides with $\db$ is given by $\tilde{\omega}^{\mathrm{t}}=(\partial H)\inv{H}$, where $H$ is the Hermitian metric matrix on a holomorphic frame, cf. \cite[Chap. I, \S4)]{Koba1}, we then have
 \begin{equation}
     \tilde{\omega}^\mathrm{t}= \left[\begin{array}{cc}
        \ca\papa{a}{z}\dx z &  \papa{a}{z}\dx z \\ 0 & 0 \end{array} \right]
        \left[ \begin{array}{cc}   1 &-a  \\ -\ca & 1+|a|^2    \end{array} \right]
      =\left[\begin{array}{cc}  0 &\papa{a}{z}\dx z \\ 0 &0  
      \end{array}\right] .
 \end{equation}
 The connection is torsion-free when the metric is K\"ahler, which is the case as deduced before and can be immediately checked. In other words, the Hermitian metric gives the Levi-Civita connection. Since $\db\tilde{\omega}$ is the curvature (1,1)-form, the result follows.
\end{proof}

We may construct the following singular spaces. Let us take the Weierstrass $\wp$-function in the $z$ complex plane with period lattice $\Lambda_1$ and let $\Lambda_2$ be another fixed lattice in the $w$ plane. Then the metric from Proposition \ref{prop_nice_ciconia_metric}, with $a(z)=\wp(z)$, descends partially to the toric manifold $\C/\Lambda_1\times\C/\Lambda_2$. Unfortunately it is not defined on the $\Lambda_1\times\C/\Lambda_2$, since $\Lambda_1$ is the pole set of $\wp$. 


The next result is true for trivial reasons. The purpose is to notice the possibilities of case (vi) in Theorem \ref{teo_Kahlerciconia} are stricter than it is shown.
\begin{prop}\label{prop_baseKconstant}
 In all cases in Theorem \ref{teo_Kahlerciconia}, the curvature $K$ is constant.
\end{prop}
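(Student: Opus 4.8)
The plan is to observe that in six of the eight cases the claim is immediate and that only cases (vi) and (viii) demand an argument. In cases (i), (ii), (iii) and (v), Theorem \ref{teo_Kahlerciconia} already gives $K=0$; in cases (iv) and (vii) it gives $K=-\frac{2f_1}{h}$ with $f_1,h$ constant. In all six, $K$ is therefore constant without further work. The substance lies in cases (vi) and (viii), where the Theorem only asserts $K=-\frac{2f'}{h}$ with $f,h\in\cinf{r^2}$, so that the right-hand side is a priori a genuine, nonconstant function of $r^2$.

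The key point I would exploit is that the left-hand side, by contrast, depends only on $z$: by \eqref{curvatura2} we have $K=-\frac{2}{\lambda}\papa{^2\log\lambda}{z\partial\cz}$, the pullback of the intrinsic Gauss curvature of $(M,g)$, which is constant along the fibres of $\pi$. Hence the K\"ahler condition produces, on $T_\calu$, an identity equating a function of $z$ alone to a function of $r^2$ alone. To conclude I would fix $z_0\in\calu$ and let $w$ range over $\C$: since $r^2=\lambda(z_0)|w|^2$ sweeps out all of $[0,+\infty[$ while $K(z_0)$ stays fixed, the map $s\mapsto-\frac{2f'(s)}{h(s)}$ must be constant on $[0,+\infty[$ and equal to $K(z_0)$. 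Evaluating at $s=0$ (where $h(0)>0$, since K\"ahlerity forces $f>0$, $fh-|a|^2>0$ and hence $h>0$) gives $K(z_0)=-\frac{2f'(0)}{h(0)}$, a value manifestly independent of $z_0$; so $K$ is constant. Equivalently, differentiating the identity in $w$ kills the left-hand side and leaves $-2\lambda\cw\,\frac{\dx}{\dx r^2}\bigl(f'/h\bigr)$ on the right, whence $f'/h$ is constant in $r^2$ off the zero section, and by smoothness everywhere.

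I expect the only real obstacle to be conceptual rather than computational: one must recognise that the apparent $r^2$-dependence of $K=-\frac{2f'}{h}$ cannot survive, precisely because $K$ is an invariant of the base surface and so cannot vary along a fibre. Once the $z$- and $r^2$-dependences are separated, the conclusion is automatic, and it carries the pleasant by-product --- anticipated in the remark following Theorem \ref{teo_Kahlerciconia} --- that the weight freedom in case (vi) is in fact far narrower than the bare statement suggests, since $f'/h$ is forced to be a constant.
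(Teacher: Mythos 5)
Your argument is correct and is precisely the ``trivial reason'' the paper has in mind: the paper offers no written proof, only the observation that the result is immediate, and the substance is exactly your separation of variables --- $K$ is a function of $z$ alone while $-2f'/h$ in cases (vi) and (viii) is a function of $r^2$ alone, so both must be constant. Nothing further is needed.
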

\begin{proof}
For case (vi) and (viii), since $K$ is independent of $r^2$, we have $Kh'+2f''=0$. Then 
 \[ 0=\papa{(Kh+2f')}{z}=\papa{K}{z}h+Kh'\papa{\lambda}{z}|w|^2+2f''\papa{\lambda}{z}|w|^2 =\papa{K}{z}h .\]
 Since $h>0$, the result follows.
\end{proof}

Ciconia metrics which are pseudo-K\"ahler, i.e. with holonomy in $\Uni(1,1)$, may be found via the same restricting hypothesis as above. The weight functions now satisfy merely $fh-|a|^2<0$.
\begin{teo}
 A ciconia metric $\ciconiametric_{f,a,h}$ with weight functions of the two types above is pseudo-K\"ahler if $f,a,h$ satisfy any of the conditions (i---viii) in Theorem \ref{teo_Kahlerciconia}, with $h\neq0$, or any of the following when $h=0$ identically: \\
 (ix) if $a\in\cinf{M,\pi}$, then $f\in\cinf{M,\pi}$ and $a$ is holomorphic;\\
 (x) if $a\in\cinf{r^2}$, then $f,a$ are constant.
\end{teo}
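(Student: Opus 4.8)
The plan is to leverage the observation, already recorded before the statement, that a ciconia metric is K\"ahler or pseudo-K\"ahler under one and the same analytic condition, $\dx\omega_{f,a,h}=0$, which Proposition~\ref{prop_omegaclosedequation} encodes as the PDE system~\eqref{omegaclosedequation}. That system does not see the signature of $\ciconiametric_{f,a,h}$; only the accompanying algebraic constraint does, being $fh-|a|^2>0$ in the K\"ahler regime and $\fhmenosaquadrado=fh-|a|^2<0$ in the pseudo-K\"ahler regime, by Proposition~\ref{prop_signatureofciconiametric}. Consequently, for the cases (i---viii) with $h\neq0$ there is nothing new to integrate: the closedness equations possess precisely the solution families already exhibited in Theorem~\ref{teo_Kahlerciconia}, and each of them furnishes a pseudo-K\"ahler metric throughout the open set where $fh-|a|^2<0$. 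The proviso $h\neq0$ merely removes the degenerate stratum $h\equiv0$, on which those normal forms break down and which I treat separately.

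For that remaining stratum I would set $h\equiv0$ in~\eqref{omegaclosedequation}. The first equation collapses to $\papa{a}{w}=0$ and the second to $\papa{f}{w}+w\Gamma\papa{\ca}{w}=\papa{\ca}{z}$. I then branch on the admissible type of $a$, recycling the elementary facts from the proof of Theorem~\ref{teo_Kahlerciconia}: any $\varphi\in\cinf{r^2}$ satisfies $\papa{\varphi}{w}=\varphi'\lambda\cw$ and hence $\papa{\varphi}{z}=w\Gamma\papa{\varphi}{w}$. If $a\in\cinf{M,\pi}$ then $\papa{a}{w}=0$ holds automatically and $\papa{\ca}{w}=0$, so the second equation reads $\papa{f}{w}=\papa{\ca}{z}$; the right-hand side depends only on $z,\cz$, so a fibre-type $f$ would carry an unmatched factor $\cw$ and be forced constant, whence in every case $f\in\cinf{M,\pi}$, and then $\papa{f}{w}=0$ gives $\papa{\ca}{z}=0$, i.e. $a$ holomorphic. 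This is exactly case (ix). If instead $a\in\cinf{r^2}$, then $\papa{a}{w}=a'\lambda\cw=0$ forces $a$ constant; the second equation becomes $\papa{f}{w}=0$, so a fibre-type $f$ is constant too, yielding case (x).

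To close, I would verify the converse by resubstituting each listed condition into~\eqref{omegaclosedequation} and checking it reduces to an identity, so that $\omega_{f,a,h}$ is closed; together with the signature constraint this makes the metric pseudo-K\"ahler, i.e. of holonomy in $\Uni(1,1)$. Here the $h\equiv0$ cases are automatic on the locus $a\neq0$, since then $fh-|a|^2=-|a|^2<0$. The only genuine subtlety I foresee is the bookkeeping of weight types: one has to argue that a fibre-type ansatz for a weight is forced to degenerate into a base-type, in fact constant, weight, the mechanism being precisely the surplus factor $\cw$ in $\papa{\varphi}{w}=\varphi'\lambda\cw$ conflicting with a $w$-independent right-hand side. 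Beyond that, the computation is the same one that already proves Theorem~\ref{teo_Kahlerciconia}.
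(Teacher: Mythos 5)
Your proposal is correct and is essentially the paper's own argument: the paper dismisses the proof as ``immediate'' precisely because K\"ahler and pseudo-K\"ahler share the single closedness condition $\dx\omega_{f,a,h}=0$ of Proposition \ref{prop_omegaclosedequation}, so cases (i---viii) carry over verbatim when $h\neq0$, while setting $h\equiv0$ in \eqref{omegaclosedequation} and using $\papa{\varphi}{w}=\varphi'\lambda\cw$ yields (ix) and (x) exactly as you compute. Your added remarks on the signature constraint $\fhmenosaquadrado<0$ (hence the need for $a\neq0$ when $h\equiv0$) supply the detail the paper leaves implicit.
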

The proof is immediate. There is more choice for $K$ on pseudo-K\"ahler metrics, as we shall see next.
\begin{coro}
 Let $f,h$ be constants and let $a\in\cinf{M,\pi}\cup\cinf{r^2}$. \\
 (i) The ciconia metric $\ciconiametric_{f,a,h}$ is K\"ahler if and only if $f,fh-|a|^2>0$, $a\in\cinf{M,\pi}$ is holomorphic and $K=0$.\\
 (ii) For any $a\in\cinf{M,\pi}$ non-vanishing and holomorphic, $\ciconiametric_{f,a,0}$ is pseudo-K\"ahler.
\end{coro}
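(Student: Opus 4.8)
The plan is to derive both parts as direct specialisations of the machinery already in place: the signature dichotomy of Proposition~\ref{prop_signatureofciconiametric} together with the closedness criterion of Proposition~\ref{prop_omegaclosedequation} as it was organised, case by case, in Theorem~\ref{teo_Kahlerciconia}. The essential observation is that fixing $f,h$ constant drops us squarely into the function-class hypotheses of that Theorem, so almost no fresh computation is needed; the only genuinely delicate point is that the disjunction $a\in\cinf{M,\pi}\cup\cinf{r^2}$ must be shown to collapse to a single clean equivalence, which it does precisely because constants lie in the overlap of the two function classes.

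For part (i), I would argue the forward implication starting from signature. If $\ciconiametric_{f,a,h}$ is K\"ahler it is positive definite, so Proposition~\ref{prop_signatureofciconiametric}(i) already yields $f>0$ and $\fhmenosaquadrado=fh-|a|^2>0$. With $f,h$ constant, I then split on the class of $a$: if $a\in\cinf{M,\pi}$ this is exactly situation (i) of Theorem~\ref{teo_Kahlerciconia}, forcing $K=0$ and $a$ holomorphic; if instead $a\in\cinf{r^2}$, it is situation (ii), forcing $K=0$ and $a$ constant, whereupon $a$ lies in $\cinf{M,\pi}$ and is trivially holomorphic. Thus both branches land on the single conclusion ``$K=0$, $a\in\cinf{M,\pi}$ holomorphic''. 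For the converse I would invoke the reciprocal half of Theorem~\ref{teo_Kahlerciconia}(i): holomorphic $a\in\cinf{M,\pi}$ with $K=0$ and $h$ constant solves the system \eqref{omegaclosedequation}, so $\omega_{f,a,h}$ is closed, while $f,\fhmenosaquadrado>0$ gives positive definiteness again by Proposition~\ref{prop_signatureofciconiametric}(i); together these say the metric is K\"ahler.

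For part (ii) I set $h=0$ with $a\in\cinf{M,\pi}$ non-vanishing and holomorphic. The signature is then forced to be indefinite: $\fhmenosaquadrado=-|a|^2<0$ since $a$ never vanishes, so Proposition~\ref{prop_signatureofciconiametric}(iii) gives signature $(m,m)$, i.e. a genuine pseudo-Hermitian metric. It remains only to check closedness, which I would do by direct substitution into \eqref{omegaclosedequation}: with $h\equiv0$, $f$ constant, and $\papa{a}{w}=\papa{\ca}{w}=0$ because $a$ depends only on $z$, the first equation reduces to $0=0$; in the second, $\papa{\ca}{z}=0$ (as $a$ holomorphic makes $\ca$ antiholomorphic) and the $\cw h$ term vanishes, again giving $0=0$. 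Hence $\omega_{f,a,0}$ is closed and the metric is pseudo-K\"ahler.

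I do not expect a serious obstacle, precisely because the hard analytic step, solving the PDE system \eqref{omegaclosedequation} under each combination of hypotheses, was already discharged in Theorem~\ref{teo_Kahlerciconia}. The one place where care is required, and the only place a careless reading could stumble, is the bookkeeping of the overlap $\cinf{M,\pi}\cap\cinf{r^2}$: it is exactly this overlap that lets the $a\in\cinf{r^2}$ branch of part~(i) fold back into the holomorphic $\cinf{M,\pi}$ branch, so that the two apparently different hypotheses produce the one equivalence asserted by the corollary.
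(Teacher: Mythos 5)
Your proposal is correct and follows essentially the route the paper intends: the corollary is stated there without proof precisely because it is the specialisation of Theorem \ref{teo_Kahlerciconia} (cases (i) and (ii), using that constants lie in $\cinf{M,\pi}\cap\cinf{r^2}$) together with the signature criterion of Proposition \ref{prop_signatureofciconiametric}, and for part (ii) your direct substitution into \eqref{omegaclosedequation} is just an unwinding of case (ix) of the pseudo-K\"ahler theorem. No gaps.
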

Clearly, in most situations of case (i), we end up with $a$ constant. We
recall the metric $\ciconiametric_{0,1,0}$ was known to K. Yano. The special ciconia metrics introduced here deserve a more detailed study, regarding the questions of geodesics, completeness, curvatures, complex and Lagrangian submanifolds, which cannot be pursued here. 

A last important remark regards the case of previously seen ciconia metrics. Suppose $\phi:M\lrr M$ is an isometry of the base $(M,g)$. It is proved in \cite{Alb2014d} that, given any pair of functions $f,h\in\cinf{r^2}$, then the induced map $\phi_*:T_M\lrr T_M$ is an  isometry for the metric $\ciconiametric_{f,0,h}$. Supposing now, moreover, that we have a function $a\in\cinf{M,\pi}$, then we find that $\phi_*$ is an isometry for the ciconia metric $\ciconiametric_{f,a,h}$ if and only if 
\begin{equation}
 \dx\phi_z\circ a_z=a_{\phi(z)}\circ\dx\phi_z.
\end{equation}
Indeed, $a$ may be seen as a vector bundle morphism along $\phi$. The proof is easy, combining the result in \cite[Theorem 1.3]{Alb2014d} with the definition \eqref{Definitionmetric_g_a}.

\subsection{K\"ahlerian Ricci-flat ciconia metric}

Here we assume the given metric $\ciconiametric_{f,a,h}$ on $T_M$ is positive definite. Recalling the notation $\fhmenosaquadrado=fh-|a|^2$, we thus have $f,\fhmenosaquadrado>0$. Since $T_M$ is a Hermitian manifold with Hermitian metric
\begin{equation}
 H = \lambda\bigl(f\,\dx z\otimes\dx\cz+a\,\dx z\otimes\ceta + \ca\,\eta\otimes\dx\cz+h\,\eta\otimes\ceta \bigr)
\end{equation}
we may use the well-known formula for the Ricci-form, the closed $(1,1)$-form
\begin{equation}
 \rho=i\db\partial\log\det H .
\end{equation}
\begin{prop}
 We have
 \begin{equation}\label{formula_rho}
  \rho=2K\pi^*\omega+i\db\partial\log \fhmenosaquadrado .
 \end{equation}
\end{prop}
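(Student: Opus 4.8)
The plan is to evaluate the determinant appearing in the stated formula $\rho=i\db\partial\log\det H$ directly, working in the holomorphic coordinate coframe $(\dx z,\dx w)$, and then to split the resulting logarithm into a base contribution governed by $K$ and the $\fhmenosaquadrado$ contribution.

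First I would record the Hermitian matrix of $H$ in the adapted coframe $(\dx z,\eta)$. By \eqref{Definitionmetric_G_incomplexcoord} and the identification $g_a=(H_a+\overline{H}_a)/2$, the Hermitian form is $H=\lambda(f\,\dx z\otimes\dx\cz+a\,\dx z\otimes\ceta+\ca\,\eta\otimes\dx\cz+h\,\eta\otimes\ceta)$, so in this coframe its matrix is $\lambda\left[\begin{smallmatrix} f & a \\ \ca & h\end{smallmatrix}\right]$, of determinant $\lambda^2(fh-|a|^2)=\lambda^2\fhmenosaquadrado$. The formula for $\rho$ however requires $\det H$ in a holomorphic frame, and $(\dx z,\eta)$ is not holomorphic, since by \eqref{theetaform} one has $\eta=w\Gamma\dx z+\dx w$ with $w\Gamma$ non-holomorphic. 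The decisive observation is that the transition from the coordinate coframe $(\dx z,\dx w)$ to $(\dx z,\eta)$ is unipotent: its matrix is $P=\left[\begin{smallmatrix} 1 & 0 \\ w\Gamma & 1\end{smallmatrix}\right]$, of determinant $1$. Under the Hermitian change-of-frame rule the matrix in coordinates is $P^{\mathrm t}HP^{*}$, so its determinant is multiplied by $|\det P|^2=1$. Consequently $\det H=\lambda^2\fhmenosaquadrado$ already in the holomorphic coordinate frame $(\dx z,\dx w)$, and $\log\det H=2\log\lambda+\log\fhmenosaquadrado$.

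Next I would apply $i\db\partial$ termwise. Since $\lambda$ is pulled back from the base, it depends only on $z,\cz$, so $\partial\log\lambda=\papa{\log\lambda}{z}\dx z$ and hence $i\db\partial\log\lambda=-i\papa{^2\log\lambda}{z\partial\cz}\,\dx z\wedge\dx\cz$. Invoking the Gauss curvature formula \eqref{curvatura2}, namely $\papa{^2\log\lambda}{z\partial\cz}=-\tfrac12 K\lambda$, together with $\pi^*\omega=\tfrac{i\lambda}{2}\,\dx z\wedge\dx\cz$, this gives $i\db\partial\log\lambda=K\pi^*\omega$, whence $2i\db\partial\log\lambda=2K\pi^*\omega$. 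The remaining summand contributes $i\db\partial\log\fhmenosaquadrado$ unchanged, and adding the two yields \eqref{formula_rho}.

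The only delicate point is justifying that $\det H$ may be read off from the adapted (non-holomorphic) coframe at all; this is exactly what the unipotent, determinant-one nature of $P$ settles, and it is what makes the $w$- and $\cw$-laden cross terms of the full coordinate matrix cancel against each other in the determinant. Everything else is routine differentiation, considerably lightened by the fact that $\lambda$ and $K$ are annihilated by $\partial_w$ and $\partial_\cw$.
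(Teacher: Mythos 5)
Your proof is correct and follows essentially the same route as the paper: both read off $\det H=\lambda^2\fhmenosaquadrado$ from the adapted coframe $(\dx z,\eta)$ and then apply $i\db\partial$ to $2\log\lambda+\log\fhmenosaquadrado$ using the curvature formula \eqref{curvatura2}. Your unipotent, determinant-one change-of-frame argument is in fact a cleaner justification of the frame-independence step than the paper's appeal to invariance of the unitary structure, which the paper instead backs up by writing out the matrix of $H$ on the holomorphic frame $\partial_z,\partial_w$ explicitly.
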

\begin{proof}
 By invariance of the unitary structure we may apply the type $(1,0)$ frame field $\pi^*\partial_z,\partial_w$ from Proposition \ref{Prop_Sasakimetrichorizontalonezerovectorfield}, and therefore deduce that $\det H=\lambda^2\fhmenosaquadrado$. Combining with \eqref{curvatura2} and $\pi^*\omega=\frac{i\lambda}{2}\dx z\wedge\dx\cz$, the result follows. We notice that
 \begin{align*}
  H &=\: \lambda\bigl(f+h|w|^2|\Gamma|^2+2\Re(\ca w\Gamma)\bigr)\,\dx z\otimes\dx\cz+\lambda(a+hw\Gamma)\,\dx z\otimes\dx\cw +  \\
  &  \qquad\qquad  +\lambda(\ca+h\cw\cgamma)\,\dx w\otimes\dx\cz+\lambda h\,\dx w\otimes\dx\cw 
 \end{align*}
 so $\det H$ is also confirmed from the matrix of $H$ on the holomorphic frame $\partial_z,\partial_w$.
\end{proof}
Following the well-know theory of Hermitian manifolds, we say that $T_M$ is K\"ahler-Einstein if $\rho=\frac{S}{4}\,\omega_{f,a,h}$, for some $S\in\R\backslash0$. For obvious reason, the space is said to be Ricci-flat if $\rho=0$. Returning to the formula in the proof of Proposition \ref{prop_omegaclosedequation}, we deduce the next result.
\begin{prop}
 The equations for an Einstein metric $\ciconiametric_{f,a,h}$, with Einstein constant $S/4$, are equivalent to the following local system:
 \begin{equation}\label{Einsteinciconiametric}
 \begin{cases}
  \dpapa{^2\log \fhmenosaquadrado}{w\partial\cw}=-\dfrac{\lambda S h}{8}, \vspace*{4mm}\\
  \dpapa{^2\log \fhmenosaquadrado}{z\partial\cw}=-\dfrac{\lambda S}{8}(a+hw\Gamma), \vspace*{4mm} \\
  {\lambda}K-\dpapa{^2\log \fhmenosaquadrado}{z\partial\cz}=\dfrac{\lambda S}{8}(f+\ca w\Gamma +a\cw\cgamma+h|w|^2|\Gamma|^2) .  
 \end{cases}
 \end{equation}
\end{prop}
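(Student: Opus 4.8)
The plan is to compare two expressions for the real $(1,1)$-form $\rho$ in the holomorphic coframe $\dx z,\dx\cz,\dx w,\dx\cw$ of $T_\calu$ and to read off the Einstein equations coefficient by coefficient. Starting from the defining condition $\rho=\frac{S}{4}\omega_{f,a,h}$ and substituting \eqref{formula_rho}, the identity to be analysed is
\begin{equation*}
 2K\pi^*\omega+i\db\partial\log\fhmenosaquadrado=\frac{S}{4}\,\omega_{f,a,h}.
\end{equation*}
Every term will be expanded in the basis $\{\dx z\wedge\dx\cz,\ \dx z\wedge\dx\cw,\ \dx w\wedge\dx\cz,\ \dx w\wedge\dx\cw\}$ of $(1,1)$-forms, so that the single form equation becomes four scalar equations.

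First I would assemble the right-hand side. The expansion of $-2i\omega_{f,a,h}$ already computed inside the proof of Proposition \ref{prop_omegaclosedequation} gives $\omega_{f,a,h}$ in exactly these coordinate differentials, after dividing by $-2i$ and using $\papa{\lambda}{z}=\lambda\Gamma$; its coefficients in the slots $\dx z\wedge\dx\cz$, $\dx z\wedge\dx\cw$, $\dx w\wedge\dx\cw$ are then $\frac{i\lambda}{2}(f+\ca w\Gamma+a\cw\cgamma+h|w|^2|\Gamma|^2)$, $\frac{i\lambda}{2}(a+hw\Gamma)$ and $\frac{i\lambda}{2}h$. On the left, $\pi^*\omega=\frac{i\lambda}{2}\dx z\wedge\dx\cz$ contributes only to the $\dx z\wedge\dx\cz$ slot (bringing in $K$ via $K=-\frac{2}{\lambda}\papa{\Gamma}{\cz}$), whereas $i\db\partial\log\fhmenosaquadrado$ is obtained by writing $\partial\log\fhmenosaquadrado=\papa{\log\fhmenosaquadrado}{z}\dx z+\papa{\log\fhmenosaquadrado}{w}\dx w$, applying $\db$, multiplying by $i$ and reordering the anti-holomorphic differentials; this produces the second-derivative coefficients $\papa{^2\log\fhmenosaquadrado}{z\partial\cz}$, $\papa{^2\log\fhmenosaquadrado}{z\partial\cw}$, $\papa{^2\log\fhmenosaquadrado}{w\partial\cz}$, $\papa{^2\log\fhmenosaquadrado}{w\partial\cw}$.

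Equating coefficients of $\dx w\wedge\dx\cw$, then $\dx z\wedge\dx\cw$, then $\dx z\wedge\dx\cz$ yields precisely the three equations of \eqref{Einsteinciconiametric}, once the common factor $i$ is cancelled. The fourth slot $\dx w\wedge\dx\cz$ gives nothing new: since $\rho$ and $\omega_{f,a,h}$ are real $(1,1)$-forms, its coefficient equation is, up to conjugation, the same as the $\dx z\wedge\dx\cw$ equation, hence redundant. The one point demanding care is the sign bookkeeping — tracking the minus signs coming from $\db\partial=-\partial\db$ and from permuting $\dx\cz,\dx\cw$ past $\dx z,\dx w$, and verifying that the cross coefficient of $\omega_{f,a,h}$ carries the sign forced by reality, namely $\frac{i\lambda}{2}(\ca+h\cw\cgamma)=-\overline{\frac{i\lambda}{2}(a+hw\Gamma)}$, so that the $\dx w\wedge\dx\cz$ identity is genuinely the conjugate of the $\dx z\wedge\dx\cw$ one; everything else is a direct substitution.
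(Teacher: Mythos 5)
Your proposal is correct and follows the same route the paper intends: substitute \eqref{formula_rho} into $\rho=\frac{S}{4}\omega_{f,a,h}$, reuse the coefficient expansion of $\omega_{f,a,h}$ from the proof of Proposition \ref{prop_omegaclosedequation}, and equate coefficients of the four $(1,1)$-basis forms, with the $\dx w\wedge\dx\cz$ slot redundant by reality. The sign checks you flag (including $\frac{i\lambda}{2}(\ca+h\cw\cgamma)=-\overline{\frac{i\lambda}{2}(a+hw\Gamma)}$) all come out as you state.
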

Let $T_M\backslash M$ denote the complement of the zero-section.
\begin{teo}\label{teo_Ricciflatopen}
 For any Riemann surface $(M,g)$ and every set of smooth functions $f,a,h$ on $T_M$ such that $f>0$ and $fh-|a|^2=\frac{1}{r^4}$, the ciconia metric $\ciconiametric_{f,a,h}$ on the open manifold $T_M\backslash M$ is Ricci-flat.
\end{teo}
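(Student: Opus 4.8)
The plan is to derive everything from the Ricci-form identity \eqref{formula_rho}, namely $\rho=2K\pi^*\omega+i\db\partial\log\fhmenosaquadrado$, exploiting the crucial feature that its right-hand side involves the weights $f,a,h$ only through the single scalar $\fhmenosaquadrado=fh-|a|^2$. Since the hypothesis pins $\fhmenosaquadrado$ down to $1/r^4$, with $r^2=\lambda|w|^2$ the global fibre-norm, the individual weights play no role in $\rho$ and the whole statement reduces to the computation of one $i\db\partial$.

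First I would check that $\ciconiametric_{f,a,h}$ is a bona fide Riemannian metric on $T_M\backslash M$. We are given $f>0$ and, on the complement of the zero section, $\fhmenosaquadrado=1/r^4>0$; hence Proposition \ref{prop_signatureofciconiametric}(i) guarantees positive definiteness, so that $\rho$ really is the Ricci form of a Hermitian metric. This step also makes transparent why the zero section must be excluded, since $1/r^4$ and $\log\fhmenosaquadrado$ blow up as $r\to0$.

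The core is to evaluate $i\db\partial\log\fhmenosaquadrado$. Writing $\log\fhmenosaquadrado=-2\log r^2=-2\log\lambda-2\log|w|^2$ in a trivialising chart (a purely local split of the globally defined function $-2\log r^2$), I would treat the two summands separately. The fibre term contributes nothing on $T_M\backslash M$: as $w$ is a holomorphic fibre coordinate, $\partial\log|w|^2=\frac{1}{w}\dx w$ and $\db\bigl(\frac{1}{w}\dx w\bigr)=0$ off the zero section, so $\log|w|^2$ is pluriharmonic there. The base term carries the curvature: from $\Gamma=\frac{1}{\lambda}\papa{\lambda}{z}$ (Proposition \ref{prop_somebasederivatives}) one gets $\partial\log\lambda=\Gamma\,\dx z$ and $\db\partial\log\lambda=-\papa{\Gamma}{\cz}\,\dx z\wedge\dx\cz$, while \eqref{curvatura2} supplies $\papa{\Gamma}{\cz}=-\frac{\lambda K}{2}$. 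Together with $\pi^*\omega=\frac{i\lambda}{2}\dx z\wedge\dx\cz$ this yields $i\db\partial\log\lambda=K\pi^*\omega$, hence $i\db\partial\log\fhmenosaquadrado=-2K\pi^*\omega$. Substituting into \eqref{formula_rho} gives $\rho=2K\pi^*\omega-2K\pi^*\omega=0$.

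I do not anticipate any genuine obstacle, the argument being a short computation once \eqref{formula_rho} is in hand; the content is a clean cancellation valid for every surface, irrespective of the value of $K$. The two points that require care are keeping the domain restricted to $T_M\backslash M$, both so that the metric is defined and so that $\log|w|^2$ is harmonic, and tracking the numerical factor: it is precisely the exponent $4$ in $1/r^4$ that produces the coefficient $-2$ on $\log\lambda$, and hence the term $-2K\pi^*\omega$ matching \eqref{formula_rho}. Thus the power in $1/r^4$ is not incidental but exactly tuned to force Ricci-flatness.
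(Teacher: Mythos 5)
Your argument is correct and follows essentially the same route as the paper: substitute $\fhmenosaquadrado=1/r^4$ into \eqref{formula_rho}, use $r^2=\lambda|w|^2$ to split $\log r^{-4}$ into $-2\log\lambda$ plus a pluriharmonic fibre term off the zero section, and identify $i\db\partial\log\lambda=K\pi^*\omega$ via \eqref{curvatura2} to obtain the cancellation $\rho=2K\pi^*\omega-2K\pi^*\omega=0$. Your version merely spells out the positive-definiteness check and the harmonicity of $\log|w|^2$, which the paper leaves implicit.
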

\begin{proof}
 Since $r^2=\lambda w\cw$, then $\db\partial\log r^{-4}=-2(\partial^2_{\cz,z}\log\lambda)\dx \cz\wedge\dx z=\lambda K\,\dx\cz\wedge\dx z$. Substituting this result in \eqref{formula_rho}, yields $\rho=0$.
\end{proof}

In fact the solution $1/r^4$ is found when we factor out $\fhmenosaquadrado$ by a function of $r^2$. It is unique, in this way, for the Ricci-flat ciconia metric. Hence we may always write $\fhmenosaquadrado=\psi/{r^4}$, out of the zero-section, and study the system \eqref{Einsteinciconiametric} with $0$ in the place of $K$ and $\psi$ in the place of $\fhmenosaquadrado$.

The next result is an improvement of the previous, meeting with the well-known problem of finding examples of complete K\"ahler Ricci-flat manifolds. We construct \textit{almost} Calabi-Yau spaces, giving evidence to the irrevocable role of the weight-function $a$.
\begin{teo}
 Let $(M,g)$ be an oriented compact Riemann surface of constant Gauss curvature $K=-1,0$ or $1$. For any $0\leq\epsilon_<\epsilon_2$, let us denote by $Z=Z_{\epsilon_1,\epsilon_2}$ the  submanifold $Z=\{u\in T_M:\ \epsilon_1<r^2<\epsilon_2\}$, open in $T_M$, where $r^2=g(u,u)$. Given the following conditions on a constant $c_0\in\R$ and on $\epsilon_1,\epsilon_2,f,a,h$, the respective ciconia metrics $\ciconiametric_{f,a,h}$ on the manifold $Z$ are K\"ahler and Ricci-flat:\\
  (i) if $K=0$, we consider $Z_{0,+\infty}$ with
  \begin{equation}
   f(r^2)=f>0\ \mbox{constant},\qquad a\neq0\ \mbox{constant},\qquad h(r^2)=\frac{|a|^2}{f}+\frac{1}{fr^4}
  \end{equation}
  and then the metric is complete.\\
  (ii) if $K=1$, we let $Z=Z_{0,{\frac{1}{c_0}}}$, for any $c_0>0$, and take
  \begin{equation}
   f(r^2)=\frac{\sqrt{1-c_0r^2}}{r},\qquad a=0,\qquad h(r^2)=\frac{1}{r^3\sqrt{1-c_0r^2}};
  \end{equation}
   and then the associated metric space may be completed to $\overline{Z}\backslash M$.\\
  (iii) also if $K=1$, we let $Z=Z_{0,\beta_+}$, where $\forall c_0\in\R$
 \begin{equation}
  \beta_+=\frac{-c_0+\sqrt{c_0^2+4|a|^2}}{2|a|^2},
 \end{equation}
 and let
 \begin{equation}
  f=\frac{1}{r}\sqrt{-|a|^2r^4-c_0r^2+1},\quad a\neq0\ \mbox{constant},\quad
  h=\frac{|a|^2r^4+1}{r^3\sqrt{-|a|^2r^4-c_0r^2+1}},
 \end{equation}
 so that the associated metric space structure on $\overline{Z}\backslash M$ is complete.\\
 (iv) if $K=-1$, we let $Z=Z_{\beta_+,+\infty}$ with $\beta_+$ as above
 and take
 \begin{equation}
  f=\frac{1}{r}\sqrt{|a|^2r^4+c_0r^2-1},\quad a\neq0\ \mbox{constant},\quad
  h=\frac{|a|^2r^4+1}{r^3\sqrt{|a|^2r^4+c_0r^2-1}},
 \end{equation}
 implying the associated metric space structure on $\overline{Z}$ is complete.
 
 Reciprocally, the above are all the Cauchy-complete solutions arising from the conditions found in Theorems \ref{teo_Kahlerciconia} and \ref{teo_Ricciflatopen}, up to conformal change.
 \end{teo}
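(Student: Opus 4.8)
The plan is to reduce both the direct construction and its converse to a single radial ODE followed by a one–dimensional completeness analysis. Because $M$ is compact of constant curvature, uniformization (this is the ``up to conformal change'' clause) lets me fix $g$ with $K\in\{-1,0,1\}$, and I may normalise $\fhmenosaquadrado=1/r^4$ exactly as in the remark following Theorem~\ref{teo_Ricciflatopen}. A holomorphic function $a$ on a compact Riemann surface is constant, so in every case relevant to Theorems~\ref{teo_Kahlerciconia} and~\ref{teo_Ricciflatopen} the weight $a$ is a complex constant and $f,h\in\cinf{r^2}$; this is precisely case~(viii) of Theorem~\ref{teo_Kahlerciconia}, whose K\"ahler equation is $K=-2f'/h$, while Ricci-flatness requires $f>0$ and $fh-|a|^2=1/r^4$.

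For the direct statement I would simply substitute each listed triple $(f,a,h)$ into these two equations. Writing $t=r^2$, a one–line computation gives $fh-|a|^2=1/t^2=1/r^4$ in all of (i)--(iv), and differentiating $f$ to form $-2f'/h$ returns the prescribed $K$ in each case. Positive–definiteness is then automatic from Proposition~\ref{prop_signatureofciconiametric}, since $f>0$ and $\fhmenosaquadrado=1/r^4>0$ off the zero section; this simultaneously identifies $(\epsilon_1,\epsilon_2)$ as the maximal interval on which $f^2>0$, namely $r^2<1/c_0$ in (ii), $r^2<\beta_+$ in (iii), and $r^2>\beta_+$ in (iv).

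The completeness claims I would settle by reducing to the fibre. On a fibre $\lambda$ is constant and $\eta$ restricts to $\dx w$, so the metric restricts to $\lambda h\,|\dx w|^2$; parametrising radially by $r=\sqrt{\lambda}\,|w|$ turns the radial length element into $\sqrt{h}\,\dx r$. Since $M$ is compact the horizontal extent is bounded on every compact $r$–range, hence escape to an end is radial and completeness is decided by convergence of $\int\sqrt{h}\,\dx r$ at each endpoint. At the zero section $h\sim 1/(f r^4)$ or $\sim 1/r^3$, so the integral diverges: the zero section lies at infinite distance and is not added. At a finite boundary where $f\to0$ and $h\to\infty$ like $(\,\cdot\,)^{-1/2}$, the integrand is integrable, so this boundary lies at finite distance and is adjoined by the Cauchy completion; here the smaller eigenvalue $\tfrac12\bigl(f+h-\sqrt{(f-h)^2+4|a|^2}\bigr)=1/(r^4h)\to0^+$, so a collapsed sphere bundle is glued in and the completion is only a metric space. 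At infinity $h\to|a|^2/f$ in (i) or $h\sim|a|/r$ in (iv), again giving a divergent integral and a complete end. Matching these behaviours yields the stated completions $Z_{0,+\infty}$, $\overline Z\backslash M$ and $\overline Z$.

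For the converse, with $a$ constant the two equations combine into $ff'=-\tfrac K2(|a|^2+1/t^2)$, whence $f^2=K/t-K|a|^2t+C$ and $h=(|a|^2+1/t^2)/f$: a one–parameter family in $C$ (and $|a|$). Analysing the sign of $f^2(t)$ for $t>0$ in each curvature case produces the admissible positivity intervals, and the completeness test of the previous paragraph selects the Cauchy-complete members: for $K=0$ one needs $a\neq0$ so that $h$ tends to a positive constant, killing the otherwise finite-distance end at infinity, giving (i); for $K=1$ the profile vanishes at $1/c_0$ when $a=0$ and at $\beta_+$ when $a\neq0$, giving (ii) and (iii); for $K=-1$ it vanishes at $\beta_+$ and grows at infinity, giving (iv). Identifying $C=-c_0$ when $K=1$ and $C=c_0$ when $K=-1$ closes the list. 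The main obstacle is exactly this completeness bookkeeping: rigorously reducing global Cauchy-completeness to the radial integral via compactness of $M$, and correctly describing the finite-distance degeneration where one direction collapses, so that adjoining the bolt produces the complete metric space $\overline Z$ or $\overline Z\backslash M$ and nothing more. The clean appearance of $a$ as the sole agent guaranteeing completeness at infinity is the qualitative point the theorem is designed to expose.
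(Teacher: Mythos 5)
Your reduction is the same as the paper's: restrict to case (vi)/(viii) of Theorem \ref{teo_Kahlerciconia}, couple the K\"ahler equation $K=-2f'/h$ with the Ricci-flat condition $fh-|a|^2=1/r^4$ of Theorem \ref{teo_Ricciflatopen}, integrate $ff'=-\tfrac{K}{2}(|a|^2+1/r^4)\cdot(r^2)'{}^{-1}$-style to get the one-parameter family $f^2=-K(|a|^2r^4+c_0r^2-1)/r^2$, $h=(|a|^2r^4+1)/(r^4f)$, and then test completeness by the radial fibre integral $\int\sqrt{h}\,\dx r$. The algebraic verification and the classification of admissible intervals are fine.

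The gap is in the completeness step, and it is exactly the step the paper itself leaves unjustified (``the distance may be read on the fibres''). The integral $\int\sqrt{h}\,\dx r$ is the length of the \emph{radial} curve, hence only an \emph{upper} bound for the distance to an end; to prove completeness you need a lower bound on the length of \emph{every} escaping curve, i.e.\ an inequality $\ciconiametric(X,X)\geq c\,h\,|X^v|^2$ for some $c>0$. Such an inequality forces $c\leq\fhmenosaquadrado/(fh)$, which tends to $0$ as $r\to\infty$ whenever $a\neq0$. Concretely, minimising $\lambda\bigl(f|p|^2+2\Re(a p\overline{q})+h|q|^2\bigr)$ over the horizontal component $p$ with $q=\eta(X)$ fixed gives the value $\lambda\,\fhmenosaquadrado\,|q|^2/f=\lambda|q|^2/(fr^4)$, attained at $p=-\ca q/f$; choosing $q$ radial, so that $|\dx r(X)|=\sqrt{\lambda}\,|q|$, produces curves reaching $r=\infty$ with length $\int^{+\infty}\dfrac{\dx r}{r^2\sqrt{f}}<+\infty$ in case (i) and $\sim\int^{+\infty}r^{-5/2}\dx r<+\infty$ in case (iv). For the flat torus with $\lambda=1$, $f=1$, $a=1$, $h=1+r^{-4}$, the explicit curve $s\mapsto(z_0-s,\,1+s)$ has $\ciconiametric$-speed squared equal to $1-2+h=(1+s)^{-4}$ and total length $1$ while $|w|\to\infty$: it is a non-convergent Cauchy path, so the metric of case (i) is not complete, and compactness of $M$ cannot repair this since the horizontal drift simply wraps around the base. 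You do compute the collapsing smallest eigenvalue at the finite-distance boundary; the same computation applied at the infinite ends of (i) and (iv) reveals the problem. (The zero-section ends and case (ii) are unaffected: there $\fhmenosaquadrado/f$ behaves like $r^{-3}$ or $r^{-4}$, whose square root is non-integrable at $0$, and for $a=0$ the cross term vanishes so the radial lower bound is genuine.) As written, therefore, neither your argument nor the paper's establishes the completeness assertions in (i) and (iv), and the ``reciprocal'' classification of Cauchy-complete solutions inherits the same defect.
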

\begin{proof}
We wish to solve equation $fh-|a|^2=1/r^4$ with the functions found in Theorem \ref{teo_Kahlerciconia}, since these yield K\"ahler metrics. By analysis of each solution, we find $f,h\in\cinf{r^2}$ and also $a$ constant. And so case (vi) in that Theorem leads the only way forward. Even for the flat base $M$, in case (iii), this yields $f(z)h(r^2)-|a(z)|^2=1/r^4$ for the usual coordinates, and then by computing a few derivatives we get that $f$ and $a$ are constants. Hence we are bound to study case (vi), which implies constant Gauss curvature on the base, cf. Proposition \ref{prop_baseKconstant}. Since $K=-2f'/h$, we shall first study $K=0$. We find $f>0$ constant and then $h(r^2)=\frac{|a|^2}{f}+\frac{1}{fr^4}$. The metric is complete if the distance to the boundary is infinite. To obtain this, we let $a\neq0$. Since $M$ is compact, the distance may be read on the fibres of $Z$. Taking a radial curve with $g$-unit velocity on a fibre, we find respectively the ciconia metric length from  the zero-section to some point or from some point to the boundary at infinity (we need $a\neq0$):
\[ \int_0(h(r^2))^\frac{1}{2}\dx r\sim\int_0\frac{1}{r^2}\dx r =+\infty,\qquad  \int^{+\infty}(h(r^2))^\frac{1}{2}\dx r\sim\int^{+\infty}{1}\,\dx r =+\infty , \]
as we wished, proving (i) in the present result.

Now let us assume $K\neq0$. Recalling the derivative $\,'$ is with respect to $r^2$, we find the identity
\[ \frac{2f'f}{K}+(|a|^2r^2)'=(\frac{1}{r^2})' . \]
The general solution, with $c_0\in\R$, and the respective $h$ are thus
\[  f^2(r^2)=\frac{-K(|a|^2r^4+c_0r^2-1)}{r^2},\ \qquad h(r^2)=\frac{|a|^2r^4+1}{r^4f} .
 \qquad\qquad (*) \]
We see first the case $a=0$. If $K=-1$, we must have $c_0r^2-1>0$ because $f>0$. Thus $c_0>0$  and we must restrict to the domain $r^2>1/c_0$. However, $h={1}/{r^3\sqrt{c_0r^2-1}}$ is not complete in the direction of infinity. If $K=1$, we may have $c_0<0$ and $0<r<+\infty$. Again $h$ is not of infinite length on the fibres. We may also consider $c_0>0$ and restrict to $0<r^2<1/c_0$. Then the determining distances are
\[ \int_0h^\frac{1}{2}\dx r\sim\int_0\frac{1}{r^{\frac{3}{2}}}\dx r =+\infty,\qquad  \int^{\sqrt{\frac{1}{c_0}}}h^\frac{1}{2}\dx r\sim\int^{\sqrt{\frac{1}{c_0}}}
\frac{1}{(1-\sqrt{c_0}r)^{\frac{1}{4}}}\dx r <+\infty . \]
Hence the associated metric space may be completed in the sense of Cauchy sequences to the \textit{outer} boundary, i.e. not including the zero-section. With this we prove (ii).

Finally let us suppose the constant $a\neq0$. For any $c_0\in\R$, we let
\[  \beta_\pm=\frac{-c_0\pm\sqrt{c_0^2+4|a|^2}}{2|a|^2} . \]
Hence $\beta_-<0<\beta_+$ and $|a|^2r^4+c_0r^2-1=|a|^2(r^2-\beta_-)(r^2-\beta_+)$.
Looking up on our general solutions (*), we see first the case $K=1$. Then we have only $0<r^2<\beta_+$ and the singular points of $h$ will be again in the extremes of the interval. We find
\[ \int_0h^\frac{1}{2}\dx r\sim\int_0\frac{1}{r^{\frac{3}{2}}}\dx r =+\infty,\qquad  \int^{\sqrt{\beta_+}}h^\frac{1}{2}\dx r\sim\int^{\sqrt{\beta_+}}\frac{1}{(\sqrt{\beta_+}-r)^{\frac{1}{4}}}\dx r <+\infty  \]
For $K=-1$, we must have $r^2>\beta_+$. The determining distances to the boundary are
\[ \int_{\sqrt{\beta_+}} h^\frac{1}{2}\dx r\sim\int_{\sqrt{\beta_+}}\frac{1}{(r-\sqrt{\beta_+})^\frac{1}{4}}\dx r <+\infty,\quad  \int^{+\infty}h^\frac{1}{2}\dx r\sim\int^{+\infty}\frac{1}{r^{\frac{1}{2}}}\,\dx r =+\infty , \]
showing any Cauchy sequence converges on the space $r^2\geq\beta_+$.
\end{proof}

The $\SU(2)$-holonomy spaces found above are all non-compact and, the majority, are not geodesically complete. Metric space completions for complex manifolds with boundary are interesting in their own. The present examples of true K\"ahlerian and Ricci-flat metrics, the analytical properties of Calabi-Yau manifolds, remain.

\bigskip

\vspace*{5mm} 

\textsc{R. Albuquerque}\ \ \ \ \textbar\ \ \ \ 
{\texttt{rpa@uevora.pt}}

Departamento de Matem\'atica da Universidade de \'Evora

Centro de Investiga\c c\~ao em Mate\-m\'a\-ti\-ca e Aplica\c c\~oes

Rua Rom\~ao Ramalho, 59, 671-7000 \'Evora, Portugal

\ \\
The research leading to these results has received funding from Funda\c c\~ao para a Ci\^encia e a Tecnologia.

\newpage
\markright{\sl\hfill THIS PART NOT  FOR PUBLICATION \hfill}

{\bf\Large{Some proofs --- not to be included in publication}}

Here we show some easy, though lengthy, proofs of some of the results in the text.

\vspace{1mm}

\begin{figure}[h]
 \centering\includegraphics[width=6cm,keepaspectratio=0.9]{./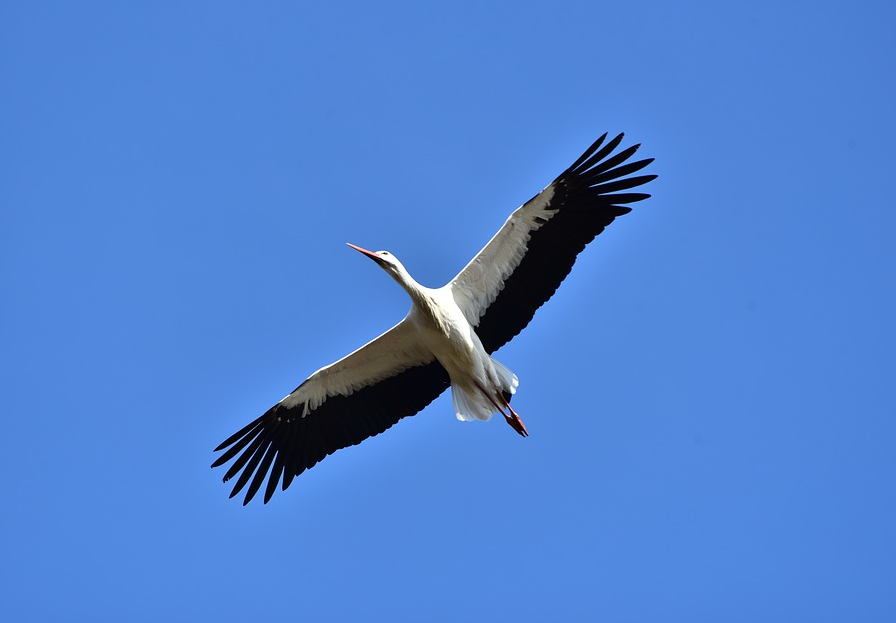}
 \caption{\textit{Ciconia ciconia}}
 \label{figure}
 \end{figure}

\begin{proof}[Proof of Proposition \ref{prop_somebasederivatives}]
Applying the two generators $\partial_z,\partial_\cz$ of $TM^\C$ on the usual definitions, we obtain: $\na_z\dx z=-\Gamma_1\dx z-\Gamma_3\dx\cz$ and $\na_z\dx\cz=-\Gamma_2\dx z-\Gamma_4\dx\cz$. Next we prove \eqref{Gammasfinalmente} by seeing $\na_zg=\na_\cz g=\overline{\na_zg}=0$ is the same as
\[ \papa{\lambda}{z}\,\dx z\dx\cz-\lambda\,(\Gamma_1\dx z+\Gamma_3\dx\cz)\dx\cz 
   -\lambda\,\dx z(\Gamma_2\dx z+\Gamma_4\dx\cz)=0 . \]
Equivalently,
\[   \papa{\lambda}{z}-\lambda\Gamma_1-\lambda\Gamma_4=0,\ \Gamma_2=0,\ \Gamma_3=0 , \]
and, since $\Gamma_4=\overline{\Gamma}_3$, the result follows.
\end{proof}

\vspace{1mm}

\begin{proof}[Proof of Proposition \ref{prop_somederivatives}]
To prove each of these identities one proceeds just as with the following case:
\begin{equation*} \na^*_z\dx z(\begin{cases}
                  \partial_z \\ \partial_\cz \\ \partial_w \\ \partial_\cw 
                 \end{cases}  )= -\dx z(\na^*_z\begin{cases}
                  \partial_z \\ \partial_\cz \\ \partial_w \\ \partial_\cw 
                 \end{cases}  )= -\dx z(\begin{cases}
                  \Gamma\partial_z+w\papa{\Gamma}{z}\partial_w \\ \cw\papa{\cgamma}{z}\partial_\cw \\ \Gamma\partial_w \\ 0 
                 \end{cases}  )= \begin{cases}
                  -\Gamma \\ 0 \\ 0 \\ 0
                 \end{cases}.
\end{equation*}
Implying $\na^*_z\dx z=-\Gamma\dx z$. Another case:
\begin{equation*} \na^*_z\dx\cw(\begin{cases}
                  \partial_z \\ \partial_\cz \\ \partial_w \\ \partial_\cw 
                 \end{cases}  )= -\dx\cw(\qquad \cdots \qquad )= -\dx\cw(\qquad\cdots\qquad )= \begin{cases} 
                  0\\ -\cw\papa{\cgamma}{z} \\ 0 \\ 0
                 \end{cases}.
\end{equation*}
Implying $\na^*_z\dx\cw=-\cw\papa{\cgamma}{z}\dx\cz$. Of course we may also obtain such coefficients from the skew-transpose metric matrix.
\end{proof}

\end{document}